\pdfoutput=1
\documentclass[leqno]{article}
\usepackage[normal]{phaine_style}
\usepackage[margin=1.5in]{geometry}
\usepackage{setspace}

\tikzcdset{
  cells={font=\everymath\expandafter{\the\everymath\displaystyle}},
}

\newcommand{\Frame}{\categ{Frm}}
\newcommand{\Latt}{\categ{Lat}}

\newcommand{\Modbf}{\categ{Mod}}

\newcommand{\GalL}{\mathrm{Gal}_{\mathrm{L}}}
\newcommand{\cond}{\mathrm{cond}}
\newcommand{\Picond}{\Pi_{\infty}^{\cond}}

\newcommand{\Pretop}{\categ{Pretop}_{\infty}}
\newcommand{\Pretopb}{\Pretop^{\b}}

\newcommand{\cohtrun}{_{<\infty}^{\coh}}
\renewcommand{\b}{\mathrm{b}}

\newcommand{\Syn}{\mathrm{Syn}}

\newcommand{\AN}{\categ{\textsc{An}}}
\newcommand{\CATinfty}{\categ{\textsc{Cat}}_{\infty}}

\newcommand{\Extr}{\categ{Extr}}

\newcommand{\Acc}{\categ{Acc}}
\newcommand{\fil}{\ensuremath{\textup{fil}}}
\newcommand{\Accfil}{\Acc{}^{\fil}}

\newcommand{\bc}{\mathrm{bc}}
\newcommand{\Cond}{\mathrm{Cond}}
\newcommand{\RTop}{\categ{RTop}_{\infty}}
\newcommand{\RTopbc}{\RTop^{\bc}}
\newcommand{\RTopcoh}{\RTop^{\coh}}
\newcommand{\spec}{\mathrm{spec}}
\newcommand{\RTopspec}{\RTop^{\spec}}

\newcommand{\idem}{\mathrm{idem}}
\newcommand{\Catidem}{\Catinfty^{\idem}}

\renewcommand{\ni}{\smallni}
\newcommand{\Kcolim}{\ensuremath{\Kcal\textup{-colim}}}

\newcommand{\Catfin}{\Cat_{\infty,\uppi}}
\newcommand{\Layfin}{\Lay_{\uppi}}

\newcommand{\Clopen}{\mathrm{Clopen}}
\renewcommand{\Open}{\mathrm{Open}}

\newcommand{\dmu}{\kern0.05em\mathrm{d} \mu}

\renewcommand{\coh}{\mathrm{coh}}
\newcommand{\Ptbf}{\mathbf{Pt}}
\newcommand{\Ptcohbf}{\Ptbf^{\coh}}

\renewcommand{\pre}{\mathrm{pre}}
\newcommand{\Funpre}{\Fun^{\pre}}
\newcommand{\Funupperstarcoh}{\Fun^{\ast,\coh}}

\newcommand{\Sheff}{\Sh_{\eff}}

\newcommand{\An}{\categ{An}}

\usepackage{caption}
\renewcommand{\Spc}{\Ani}

\DeclareSourcemap{
  \maps[datatype=bibtex]{
    \map{
      \step[fieldsource=note, final]
      \step[fieldset=addendum, origfieldval, final]
      \step[fieldset=note, null]
    }
  }
}

\title{\Large Classifying anima of condensed \texorpdfstring{$\infty$}{∞}-categories of points}

\author{\normalsize Peter J. Haine}
 
\date{\normalsize \today}

\begin{document}

\maketitle


\begin{abstract} 
	We compare the classifying anima of two natural condensed \categories associated to a coherent \topos.
	One from our work with Barwick and Glasman on exit-path categories in algebraic geometry, and the other from Lurie's work on ultracategories.
	The key consequence of our comparison is a connection between algebraic geometry and model theory: up to a mild completion, the proétale fundamental group of a scheme and the Lascar group of a complete first-order theory are both special cases of the same construction.
\end{abstract}

\setcounter{tocdepth}{1}

\tableofcontents


\setcounter{section}{-1}

\section{Introduction}

The purpose of this note is to compare the classifying anima of two natural condensed \categories associated to a coherent \topos.
The first condensed \category makes sense for any \topos $ \Xcal $ and was introduced (in the setting of $ 1 $-topoi) in Lurie's work on ultracategories \cite{Ultracategories}.
We call it the \defn{condensed \category of points} $ \Ptbf(\Xcal) $ of $ \Xcal $; it is given by sending an extremally disconnected profinite set $ K $ to the \category
\begin{equation*}
	\Ptbf(\Xcal)(K) \colonequals \Funupperstar(\Xcal,\Sh(K))
\end{equation*}
of left exact left adjoints $ \fupperstar \colon \Xcal \to \Sh(K) $.
In particular, the global sections of $ \Ptbf(\Xcal) $ recover the \category of points of $ \Xcal $. 

The second condensed \category we consider only makes sense when $ \Xcal $ is a \textit{coherent} \topos (the topos-theoretic version of quasicompactness and quasiseparatedness in scheme theory), and was implicitly introduced in our work with Barwick and Glasman \cite[\S13.5]{arXiv:1807.03281}.
We call it the \defn{condensed \category of coherent points} \smash{$ \Ptcohbf(\Xcal) $} of $ \Xcal $; it is given by sending an extremally disconnected profinite set $ K $ to the \category
\begin{equation*}
	\Ptcohbf(\Xcal)(K) \colonequals \Funupperstarcoh(\Xcal,\Sh(K))
\end{equation*}
of left exact left adjoints $ \fupperstar \colon \Xcal \to \Sh(K) $ that preserve coherent objects.
In particular, \smash{$ \Ptcohbf(\Xcal) $} is a full subcategory of $ \Ptbf(\Xcal) $.
Here are two examples to put these condensed \categories in context.

\begin{example}\label{ex:most_simple_examples_of_Ptbf}
	Let $ \Xcal $ is the \topos of sheaves on the Sierpiński space.
	Then for each extremally disconnected profinite set $ K $, the inclusion \smash{$ \Ptcohbf(\Xcal)(K) \inclusion \Ptbf(\Xcal)(K) $} is simply the inclusion
	\begin{equation*}
		\Clopen(K) \inclusion \Open(K)
	\end{equation*}
	of the poset of clopen subsets of $ K $ into the poset of all open subsets.

	More generally, if $ X $ is a spectral topological space and $ \Xcal = \Sh(X) $, then $ \Ptbf(\Xcal)(K) $ is the poset $ \Map(K,X) $ of continuous maps $ K \to X $ ordered by \textit{pointwise specialization:} $ f \leq g $ if and only if for each $ k \in K $, we have \smash{$ f(k) \in \overline{\{g(k)\}} $}.
	In this case, \smash{$ \Ptcohbf(\Xcal)(K) $} is the subposet
	\begin{equation*}
		\Map^{\qc}(K,X) \subset \Map(K,X)
	\end{equation*}
	of \textit{quasicompact} maps, i.e., maps such that the preimage of a quasicompact open is clopen.
\end{example}

\begin{example}\label{ex:model_theory_example_of_Ptbf}
	Let $ T $ be a coherent first-order theory, and let $ \Xcal $ be the classifying \topos of $ T $.
	Then for each extremally disconnected profinite set $ K $, the \category $ \Ptbf(\Xcal)(K) $ is the \category $ \Mod_{T}(\Sh(K;\Set)) $ of models of the theory $ T $ valued in the $ 1 $-category of sheaves of sets on $ K $.
	We write $ \Modbf_T \colonequals \Ptbf(\Xcal) $ and refer to $ \Modbf_T $ as the \textit{condensed category} of models of $ T $.
	The full subcategory \smash{$ \Ptcohbf(\Xcal)(K) $} has not traditionally been considered in model theory, but is a subcategory of models satisfying an additional finiteness property.
\end{example}

As we now explain, both algebraic geometry and model theory, these condensed \categories give rise to interesting invariants.
Write $ \Bup \colon \Catinfty \to \An $ for the left adjoint to the inclusion of anima into \categories; for \acategory $ \Ccal $, refer to $ \Bup \Ccal $ as the \textit{classifying anima} of $ \Ccal $.
Since $ \Bup $ preserves finite products, pointwise application of $ \Bup $ defined a left adjoint to the inclusion $ \Cond(\An) \inclusion \Cond(\Catinfty) $ of condensed anima into condensed \categories.
We also denote this left adjoint by $ \Bup $.

\medskip

\noindent \textbf{Algebraic geometry.} 
Let $ X $ be a qcqs scheme.
Building off our work with Barwick--Glasman \cite[\S13.5]{arXiv:1807.03281}, in recent work with Holzschuh--Lara--Mair--Martini--Wolf \cite{arXiv:2510.07443}, we studied the condensed classifying anima \smash{$ \Bup \Ptcohbf(X_{\et}) $} of the condensed \category of coherent points of the étale \topos of $ X $.
We call this condensed anima the \defn{condensed homotopy type} of $ X $, and denote it by $ \Picond(X) $.
One of our main results is that a mild completion of the fundamental group of $ \Picond(X) $ recovers the proétale fundamental group of Bhatt--Scholze \cite{MR3379634}.
See \cite[Theorem 1.9]{arXiv:2510.07443}.

\medskip

\noindent \textbf{Model theory.}
In forthcoming work with Damaj and Zhang \cite{Damaj-Haine-Zhang:Lascar_group}, we prove a similar result, but in the setting of model theory.
Given a complete first-order theory $ T $, in \cite{MR654786}, Lascar introduced a quasicompact topological group $ \GalL(T) $ now referred to as the \textit{Lascar group} of $ T $.
As the notation suggests, the Lascar group plays the role of the absolute Galois group of the theory $ T $.
Campion--Cousins--Ye \cite{MR4362919} proved that the discrete group underlying the Lascar group is the fundamental group of the classifying anima of the category $ \Mod_T(\Set) $ of models of $ T $ valued in the category of sets \cite[Theorem 3.5]{MR4362919}.
As explained in \Cref{ex:model_theory_example_of_Ptbf}, the condensed category of points of the classifying \topos of $ T $ is a natural enhancement of $ \Mod_T(\Set) $ to a condensed category.
In \cite{Damaj-Haine-Zhang:Lascar_group}, we refine Campion, Cousins, and Ye's result by showing that there is an isomorphism of condensed groups
\begin{equation*}
	\GalL(T) \equivalent \uppi_1(\Bup\Modbf_T)
\end{equation*}
between the Lascar group and the fundamental group of the condensed classifying anima of $ \Modbf_T $.

\medskip

The takeaway is that the proétale fundamental group and the Lascar group arise as special cases of \textit{similar} constructions.
But the first uses the smaller condensed \category \smash{$ \Ptcohbf(\Xcal) $}, and the second uses $ \Ptbf(\Xcal) $.
The main result of this note is that, in fact, these two invariants are special cases of the \textit{same} construction.%
\footnote{Up to the mild completion of $ \uppi_1(\Picond(X)) $ needed to recover the proétale fundamental group as defined by Bhatt--Scholze.}
Namely, there is a large class of coherent \topoi $ \Xcal $ for which the inclusion \smash{$ \Ptcohbf(\Xcal) \inclusion \Ptbf(\Xcal) $} induces an equivalence on condensed classifying anima.

The statement of our main result involves \textit{spectral \topoi} introduced in our work with Barwick and Glasman \cite{arXiv:1807.03281}. 
These are the bounded coherent \topoi $ \Xcal $ whose \category of points $ \Pt(\Xcal) $ satisfies the property that for every object $ \xupperstar \in \Pt(\Xcal) $, every endomorphism $ \xupperstar \to \xupperstar $ is an equivalence.
See \cref{subsec:spectral_topoi} for a quick review.
The most important example is that the étale \topos of a qcqs scheme is spectral.
Here is our main result:

\begin{theorem}[{(\Cref{thm:extra_adjoint_for_spectral_topoi})}]\label{intro_thm:extra_adjoint_for_spectral_topoi}
	Let $ \Xcal $ be a spectral \topos.
	Then for each extremally disconnected profinite set $ K $, the inclusion
	\begin{equation*}
		\Ptcohbf(\Xcal)(K) \inclusion \Ptbf(\Xcal)(K)
	\end{equation*}
	admits a left adjoint.
	As a consequence, the inclusion of condensed \categories $ \Ptcohbf(\Xcal) \inclusion \Ptbf(\Xcal) $ induces an equivalence on condensed classifying anima.
\end{theorem}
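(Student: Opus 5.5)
The second assertion follows from the first by a formal argument, so I would prove the existence of the left adjoint first.

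\textbf{Reduction of the second claim.} An adjunction $L \dashv i$ between \categories induces an equivalence $\Bup L \simeq (\Bup i)^{-1}$. The reason is that the unit and counit are natural transformations, and $\Bup$ sends natural transformations to homotopies. So once the left adjoint exists, the map $\Bup\Ptcohbf(\Xcal)(K) \to \Bup\Ptbf(\Xcal)(K)$ is an equivalence for every extremally disconnected $K$. Since $\Bup$ on condensed \categories is computed pointwise, this already says that $\Bup\Ptcohbf(\Xcal) \to \Bup\Ptbf(\Xcal)$ is an equivalence of condensed anima. No compatibility of the left adjoints in $K$ is needed.

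\textbf{Guiding model case.} For the existence of the left adjoint I would work from the Sierpiński case of \Cref{ex:most_simple_examples_of_Ptbf}. There the left adjoint to $\Clopen(K) \inclusion \Open(K)$ is $U \mapsto \overline{U}$, and this is well defined exactly because $K$ is extremally disconnected, so closures of opens are clopen. For a spectral space $X$ the analogue should replace $f \colon K \to X$ by the smallest quasicompact map above it in pointwise specialization. Concretely, for each quasicompact open $V \subset X$, replace $f^{-1}(V)$ by its closure; the fact that closure preserves finite unions and intersections of regular opens is what should make this compatible with finite limits.

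\textbf{General case.} I would first recall from \cref{subsec:spectral_topoi} (following Barwick--Glasman) that a spectral \topos is a cofiltered limit $\Xcal \simeq \lim_i \Xcal_i$ of \topoi of the form $\Fun(\Pi_i,\An)$ with $\Pi_i$ a finite layered \category, along coherent geometric morphisms. The same holds for its category of coherent objects, which is the colimit of the $\Xcal_i^{\coh}$.
\begin{itemize}
\item A point $\Xcal_i \to \Sh(K)$ is a flat functor $\Pi_i^{\op} \to \Sh(K)$. Because $\Pi_i$ is layered, this amounts to a stratification of $K$ by the layers of $\Pi_i$, together with sheaves and gluing data on the strata.
\item The coherent such points are exactly those whose strata are clopen and whose sheaves are locally constant with $\uppi$-finite values.
\item I would then define $L$ by replacing each open stratum union by its closure, which is clopen since $K$ is extremally disconnected. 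The stalk data would be transported along the closure by left Kan extension, using that $\Sh(K)$ is hypercomplete and the relevant restrictions are along clopen inclusions.
\item Finally, I would check the universal property $\Map(Lf^{\ast},g^{\ast}) \simeq \Map(f^{\ast},g^{\ast})$ for coherent $g^{\ast}$. Here the layered hypothesis is essential: endomorphisms being equivalences means the comparison maps only go "up" the stratification, so the closure is the initial coherent recipient.
\end{itemize}

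\textbf{Passing to the limit.} It remains to show that these levelwise left adjoints are compatible with the transition maps $\Xcal_i \to \Xcal_j$, so that they assemble into a left adjoint for $\Xcal$. I would use the fact that $\Ptbf(\Xcal)(K)$ and $\Ptcohbf(\Xcal)(K)$ are limits of the corresponding $\Pi_i$-level categories. It then suffices that closure of strata commutes with pulling back along the coherent maps $\Pi_i \to \Pi_j$, which holds because preimages of clopens are clopen and closure commutes with finite unions.

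\textbf{Main obstacle.} I expect the hardest step to be the level-$i$ construction beyond the poset case. One must verify that extending the non-discrete stalk data across the closure still yields a left exact functor, and that it lands in coherent objects. My first attempt would be to treat one layer at a time by induction on the number of layers of $\Pi_i$, using the recollement of $\Fun(\Pi_i,\An)$ along its open top layer.
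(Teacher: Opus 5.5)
The deduction of the second assertion from the first is the paper's argument. The gap is in the first assertion. Outside the spectral-space (poset) case, where your closure formula is correct and matches the sketch in the introduction, the proposal never constructs the left adjoint. This is the step you yourself flag as the main obstacle, and the mechanism you describe for it does not work as stated:

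\begin{itemize}
\item The inclusion $u \colon U \inclusion \overline{U}$ of an open union of strata into its closure is an open dense inclusion, not a clopen one.
\item Left Kan extension $u_{!}$ along $u$ is extension by the empty sheaf. It therefore cannot produce the locally constant $\uppi$-finite data a coherent point must carry on $\overline{U} \sminus U$.
\item The natural replacement is the pushforward $u_{\ast}$. You would then have to prove that the resulting functor out of $\Fun(\Pi_i,\An)$ is left exact, preserves effective epimorphisms, and lands in coherent sheaves. For non-posetal $\Pi_i$ (e.g.\ torsors under finite groups on a non-clopen open stratum) this is a genuine Łoś-type statement, not a formality.
\item The appeal to layeredness (comparison maps ``only go up'') is a heuristic, not a verification of the universal property.
\item Passing to the limit would also need a Beck--Chevalley compatibility of the level-wise left adjoints with the transition maps. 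You check this only for the poset-type closure formula.
\end{itemize}

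The missing idea is that you never need to treat a general extremally disconnected $K$ directly. By Gleason's theorem, $K$ is a retract of some $\upbeta(S)$. So the inclusion $\Ptcohbf(\Xcal)(K) \inclusion \Ptbf(\Xcal)(K)$ is a retract of the corresponding inclusion for $\upbeta(S)$. Since $\Ptcohbf(\Xcal)(K)$ is idempotent complete, it suffices to treat $K = \upbeta(S)$. For $K = \upbeta(S)$ the argument has three steps:

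\begin{itemize}
\item Spectrality gives that restriction along $j \colon S \inclusion \upbeta(S)$ identifies $\Ptcohbf(\Xcal)(\upbeta(S))$ with $\Pt(\Xcal)^S$ (\Cref{lem:value_of_Ptcohbf_on_betaS}).
\item The sheaf-theoretic Łoś theorem (\Cref{cor:sheaf-theoretic_Los_ultraproduct_theorem}) says $\jlowerstar$ is a morphism of $\infty$-pretopoi. Hence $\jupperstar \of - \colon \Ptbf(\Xcal)(\upbeta(S)) \to \Pt(\Xcal)^S$ has the fully faithful right adjoint $\jlowerstar \of -$ (\Cref{cor:restriction_from_PtXbetaS_to_PtXS_admits_a_right_adjoint}).
\item The inclusion of coherent points is identified with this right adjoint. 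Its left adjoint is therefore ``restrict to $S$, then take the unique coherent extension.''
\end{itemize}

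This route uses no stratifications, torsors, or gluing data. The Łoś theorem supplies exactly the left exactness and preservation of effective epimorphisms that your extension-over-closures step would have to prove by hand. In the spectral-space case this left adjoint sends $f$ to $V \mapsto \overline{f^{-1}(V) \cap S} = \overline{f^{-1}(V)}$, which recovers your closure formula.
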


\begin{warning}
	If $ \Xcal $ is a coherent \topos which is not spectral, then even the inclusion 
	\begin{equation*}
		\Ptcohbf(\Xcal)(\pt) \inclusion \Ptbf(\Xcal)(\pt)
	\end{equation*}
	of coherent points into all points need not admit a left adjoint.
	For example, if $ \Xcal $ is the classifying \topos of the theory of abelian groups, this is the inclusion of the subcategory of finite abelian groups into all abelian groups.
\end{warning}

We conclude the introduction by unpacking the two simplest cases of \Cref{intro_thm:extra_adjoint_for_spectral_topoi}.

\begin{nul}
	As in \Cref{ex:most_simple_examples_of_Ptbf}, first consider the case when $ \Xcal $ is the \topos of sheaves on the Sierpiński space.
	Then the statement is that the inclusion $ \Clopen(K) \inclusion \Open(K) $ admits a left adjoint.
	This is clear: it is given by sending $ U \subset K $ to the closure \smash{$ \overline{U} $} (which is open since $ K $ is extremally disconnected).
\end{nul}

\begin{nul}
	More generally, if $ \Xcal $ is the \topos of sheaves on a spectral topological space $ X $, then \Cref{intro_thm:extra_adjoint_for_spectral_topoi} says that the inclusion
	\begin{equation*}
		\Map^{\qc}(K,X) \inclusion \Map(K,X)
	\end{equation*}
	of the poset of quasicompact maps into all maps admits a left adjoint.
	That is, any map $ f \colon K \to X $ has a best approximation by a quasicompact map that is a pointwise generization of $ f $.
	
	In this case, we can demonstrate our method of proof of \Cref{intro_thm:extra_adjoint_for_spectral_topoi}.
	By the Stone duality equivalence between distributive lattices and spectral spaces, $ \Map^{\qc}(K,X) $ is isomorphic to the poset
	\begin{equation*}
		\Map_{\Latt}(\Open^{\qc}(X),\Clopen(K))
	\end{equation*}
	of lattice homomorphisms from the poset of quasicompact opens in $ X $ to the poset of clopens in $ K $.
	By the equivalence between sober spaces and frames, $ \Map(K,X) $ is isomorphic to the poset 
	\begin{equation*}
		\Map_{\Frame}(\Open(X),\Open(K))
	\end{equation*}
	of frame maps $ \Open(X) \to \Open(K) $.
	Since quasicompact opens form a basis for the topology on $ X $ and are closed under finite intersections, restriction defines an isomorphism of posets
	\begin{equation*}
		\Map_{\Frame}(\Open(X),\Open(K)) \equivalence \Map_{\Latt}(\Open^{\qc}(X),\Open(K)) \period
	\end{equation*}
	Here, the target consists of lattice homomorphisms $ \Open^{\qc}(X) \to \Open(K) $.
	Under these identifications, the inclusion $ \Map^{\qc}(K,X) \inclusion \Map(K,X) $ is identified with the obvious inclusion
	\begin{equation}\label{eq:inclusion_of_lattice_maps}
		\Map_{\Latt}(\Open^{\qc}(X),\Clopen(K)) \inclusion \Map_{\Latt}(\Open^{\qc}(X),\Open(K)) \period
	\end{equation}
	To see that this admits a left adjoint, note that, again since $ K $ is extremally disconnected, the closure operator $ \Open(K) \to \Clopen(K) $ is left adjoint to the inclusion, and also preserves finite joins and the initial object.
	In an extremally disconnected space, for open subsets $ U $ and $ V $, we also have the identity $ \overline{U \intersect V} = \overline{U} \intersect \overline{V} $.
	Hence pointwise application of the closure operator $ \Open(K) \to \Clopen(K) $ defines a left adjoint to the inclusion \eqref{eq:inclusion_of_lattice_maps}.
\end{nul}


\subsection{Linear overview}

In \cref{sec:around_the_Los_ultraproduct_theorem}, we explain a sheaf-theoretic interpretation of the Łoś ultraproduct theorem.
This is implicit in Lurie's work on ultracategories \cite{Ultracategories}, and is what lets us analyze the condensed classifying anima of the condensed \category of points $ \Ptbf(\Xcal) $.
In \cref{sec:classifying_anima_of_condensed_categories_of_points}, we introduce the condensed \categories of points we consider in full detail, and prove \Cref{intro_thm:extra_adjoint_for_spectral_topoi}.
Since the results require a number of technical notions from the theory of \topoi, in \Cref{app:complements_on_topoi}, we've collected background on bounded and Postnikov complete \topoi, coherent \topoi and the classification of bounded coherent \topoi in terms of \pretopoi, and spectral \topoi.


\subsection{Notational conventions}

\begin{enumerate}
	\item We write $ \CATinfty $ for the \category of large \categories, and $ \AN \subset \CATinfty $ for the full subcategory spanned by the large anima (also referred to as spaces or \groupoids).
	We write $ \Catinfty \subset \CATinfty $ and $ \An \subset \AN $ for the full subcategories spanned by the small \categories and anima, respectively.

	\item We write $ \RTop $ for the \category of \topoi and geometric morphisms, i.e., right adjoints $ \flowerstar $ whose left adjoint $ \fupperstar $ is left exact.

	\item Given \categories $ \Xcal $ and $ \Ycal $ with limits and colimits, we write
	\begin{equation*}
		\Funlowerstar(\Ycal,\Xcal) \subset \Fun(\Ycal,\Xcal)
	\end{equation*}
	for the full subcategory spanned by the right adjoint functors whose left adjoint is left exact (when $ \Xcal $ and $ \Ycal $ are \topoi these are the geometric morphisms of \topoi).
	We write
	\begin{equation*}
		\Funupperstar(\Xcal,\Ycal) \subset \Fun(\Xcal,\Ycal)
	\end{equation*}
	for the full subcategory spanned by the left exact left adjoints (when $ \Xcal $ and $ \Ycal $ are \topoi these are the \defn{algebraic} morphisms of \topoi).

	\item Given \atopos $ \Xcal $ we write $ \Pt(\Xcal) \colonequals \Funupperstar(\Xcal,\An) $ for the \textit{\category of points} of $ \Xcal $.

	\item If $ \Xcal $ and $ \Ycal $ are \textit{coherent} \topoi (see \Cref{rec:coherence}), we write $ \Funupperstarcoh(\Xcal,\Ycal) \subset \Funupperstar(\Xcal,\Ycal)  $ for the full subcategory spanned by those left exact left adjoints that also preserve coherent objects. 

	\item Given \pretopoi $ \Ccal $ and $ \Dcal $ (see \Cref{def:pretopos}), we write $ \Funpre(\Ccal,\Dcal) \subset \Fun(\Ccal,\Dcal) $ for the full subcategory spanned by the morphisms of \pretopoi.
	That is, the functors that preserve finite limits, finite coproducts, and effective epimorphisms.
\end{enumerate}


\subsection{Acknowledgments}

We thank Clark Barwick for countless insightful conversations over the years around many of the topics appearing in this note.  
We thank Jacob Lurie for enlightening discussions on his work on ultracategories; in particular, for indicating that \Cref{cor:restriction_from_PtXbetaS_to_PtXS_admits_a_right_adjoint} should be true.


\section{Around the Łoś ultraproduct theorem}\label{sec:around_the_Los_ultraproduct_theorem}

In this section, we explain the following sheaf-theoretic formulation of the Łoś ultraproduct theorem: for any set $ S $ with Čech--Stone compactification $ \upbeta(S) $, the functor
\begin{equation*}
	\Sh(S) \inclusion \Sh(\upbeta(S))
\end{equation*}
given by pushforward along the inclusion $ S \inclusion \upbeta(S) $ preserves finite limits, finite coproducts, and effective epimorphisms.
See \Cref{cor:sheaf-theoretic_Los_ultraproduct_theorem}.
This formulation has the important consequence that for any coherent \topos $ \Xcal $, the restriction functor
\begin{equation*}
	\Funupperstar(\Xcal,\Sh(\upbeta(S))) \longrightarrow \Funupperstar(\Xcal,\Sh(S)) \equivalent \Pt(\Xcal)^S 
\end{equation*}
admits a fully faithful right adjoint (\Cref{cor:restriction_from_PtXbetaS_to_PtXS_admits_a_right_adjoint}).
This is what allows us to analyze the condensed classifying anima of the condensed \category $ \Ptbf(\Xcal) $ of points of $ \Xcal $ defined in the introduction

In \cref{subsec:ultraproducts_via_sheaf_theory}, we explain how to extract ultraproduct operations from the usual pushforward and pullback operations in sheaf theory.
This is well-known to experts, but we were unable to locate a reference.
In \cref{subsec:the_Los_ultraproduct_theorem_and_consequences}, we explain the sheaf-theoretic formulation of the Łoś ultraproduct theorem, its relation to the usual formulation, and some topos-theoretic consequences.


\subsection{Ultraproducts via sheaf theory}\label{subsec:ultraproducts_via_sheaf_theory}

We begin by recalling a bit of background on Čech--Stone compactifications.
We refer the unfamiliar reader to \cite[\S3.2]{Ultracategories} for details.

\begin{notation}
	Let $ S $ be a set.
	If we regard $ S $ as a topological space, it will always be with the discrete topology.
	We write $ j \colon S \inclusion \upbeta(S) $ for the Čech--Stone compactification of $ S $.
	We identify $ \upbeta(S) $ with the set of ultrafilters on $ S $.
	We typically denote an ultrafilter by $ \mu $.
	Under this identification, $ j $ sends an element $ s \in S $ to the principal ultrafilter generated by $ s $.
\end{notation}

\begin{recollection}\label{rec:clopens_of_betaS}
	Let $ S $ be a set.
	Then the natural map of posets
	\begin{equation*}
		\Clopen(\upbeta(S)) \to \Sub(S) \comma \quad U \mapsto U \intersect S
	\end{equation*}
	is an isomorphism.
	The inverse sends a subset $ S_0 \subset S $ to the closure $ \overline{S}_0 $ in $ \upbeta(S) $.
	Moreover, the closure \smash{$ \overline{S}_0 \subset \upbeta(S) $} is the image of the natural closed immersion $ \upbeta(S_0) \inclusion \upbeta(S) $. 
	The topology on $ \upbeta(S) $ is such that an ultrafilter $ \mu $ is in \smash{$ \overline{S}_0 $} if and only if the subset $ S_0 $ is contained in $ \mu $.
\end{recollection}

\begin{recollection}
	Let $ S $ be a set.
	There is a natural equivalence $ \Sh(S) \equivalent \An^S $.
	In one direction, this sends a sheaf $ X $ to the collection of stalks $ (X_s)_{s \in S} $.
	In the other direction, a collection of anima $ (X_s)_{s \in S} $ is sent to the sheaf given by the assignment
	\begin{equation*}
		S_0 \mapsto \prod_{s \in S_0} X_s \comma
	\end{equation*}
	with restriction maps the projections.
	(More formally, the right Kan extension of the functor $ S \to \An $ given by $ s \mapsto X_s $ along the inclusion $ S \inclusion \Sub(S)^{\op} $.)
	We often identify $ \Sh(S) $ with \smash{$ \An^S $} via this equivalence.
\end{recollection}

\begin{observation}[{(formula for $ \jlowerstar $ on clopens)}]\label{obs:formula_for_jlowerstar_on_clopens}
	Given a sheaf $ X \in \Sh(S) $, unpacking the formula for the pushforward of sheaves says that for each subset $ S_0 \subset S $, we have a natural identification
	\begin{equation*}
		\jlowerstar(X)(\overline{S}_0) \equivalent \prod_{s \in S_0} X_s \period
	\end{equation*}
\end{observation}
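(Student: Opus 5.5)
The plan is to compute the sections of $ \jlowerstar(X) $ over the clopen $ \overline{S}_0 $ straight from the definition of pushforward, namely $ \jlowerstar(X)(U) \equivalent X(j^{-1}(U)) $ for every open $ U \subset \upbeta(S) $. Taking $ U = \overline{S}_0 $ reduces the problem to two facts: identifying the preimage $ j^{-1}(\overline{S}_0) $, and evaluating a sheaf on the discrete space $ S $ at a subset.

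First, I claim $ j^{-1}(\overline{S}_0) = \overline{S}_0 \intersect S = S_0 $. This is part of \Cref{rec:clopens_of_betaS}: the map $ U \mapsto U \intersect S $ and the closure map are mutually inverse. Concretely, a principal ultrafilter generated by $ s $ contains $ S_0 $ if and only if $ s \in S_0 $. Hence $ \jlowerstar(X)(\overline{S}_0) \equivalent X(S_0) $.

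Second, under the equivalence $ \Sh(S) \equivalent \An^S $, the value of $ X $ on a subset $ S_0 $ is $ \prod_{s \in S_0} X_s $. To see this, note that $ S_0 $ is covered by the disjoint family of singletons $ \{s\} $ for $ s \in S_0 $. All pairwise intersections of distinct singletons are empty, so the sheaf condition gives $ X(S_0) \equivalent \prod_{s \in S_0} X(\{s\}) $. Finally, $ X(\{s\}) \equivalent X_s $, since $ \{s\} $ is the smallest open neighborhood of $ s $. Equivalently, one can read this off from the right Kan extension description in the preceding recollection.

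Naturality in $ X $ and in $ S_0 $ is automatic, since every identification used is functorial. Under these identifications, restriction along an inclusion $ S_1 \subset S_0 $ corresponds to the projection of products. There is no real obstacle here. The only point requiring care is the identification $ j^{-1}(\overline{S}_0) = S_0 $, which is already recorded in \Cref{rec:clopens_of_betaS}.
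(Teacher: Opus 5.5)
Your proposal is correct and is exactly the unpacking the paper intends: $ \jlowerstar(X)(\overline{S}_0) \equivalent X(j^{-1}(\overline{S}_0)) = X(S_0) $ by \Cref{rec:clopens_of_betaS}, and $ X(S_0) \equivalent \prod_{s \in S_0} X_s $ by the description of the equivalence $ \Sh(S) \equivalent \An^S $. Your sheaf-condition argument for the last step, using the cover of $ S_0 $ by disjoint singletons together with $ X(\emptyset) \equivalent \pt $, recovers the same identification.
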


Now we fix our notation for ultraproducts. 

\begin{recollection}[(ultraproducts)]
	Let $ \Ecal $ be \acategory with filtered colimits and products, let $ S $ be a set, and let $ \mu \in \upbeta(S) $ be an ultrafilter.
	The \defn{ultraproduct functor} $ \int_{S} (-)\dmu \colon \Ecal^S \to \Ecal $ is the functor given by the assignment
	\begin{equation*}
		(X_s)_{s \in S} \mapsto \int_{S} X_s \dmu \colonequals \colim_{S_0 \in \mu^{\op}} \prod_{s \in S_0} X_s \period 
	\end{equation*}
	More formally, $ \int_{S} (-)\dmu $ is the composite
	\begin{equation*}
		\begin{tikzcd}[sep=4em]
			\Ecal^S \arrow[r] & \Fun(\Sub(S)^{\op},\Ecal) \arrow[r, "\mathrm{restrict}"] & \Fun(\mu^{\op},\Ecal) \arrow[r, "\colim"] & \Ecal \period
		\end{tikzcd}
	\end{equation*} 
	Here, the left-most functor is given by right Kan extension of the functor $ S \to \An $ given by $ s \mapsto X_s $ along the inclusion $ S \inclusion \Sub(S)^{\op} $.
\end{recollection}

The following is the sheaf-theoretic interpretation of ultraproducts.

\begin{proposition}\label{prop:stalks_of_jlowerstar}
	Let $ S $ be a set and let $ \mu \in \upbeta(S) $ be an ultrafilter on $ S $.
	Then the composite 
	\begin{equation*}
		\begin{tikzcd}
			\An^S \equivalent \Sh(S) \arrow[r, hooked, "\jlowerstar"] & \Sh(\upbeta(S)) \arrow[r, "\muupperstar"] & \An
		\end{tikzcd}
	\end{equation*}
	is equivalent to the ultraproduct functor $ \int_S (-)\dmu \colon \An^S \to \An $.
\end{proposition}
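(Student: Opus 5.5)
The plan is to compute the stalk of a sheaf on $ \upbeta(S) $ at the point $ \mu $ as a colimit over a basis of neighborhoods, and then apply \Cref{obs:formula_for_jlowerstar_on_clopens}. For any topological space $ Y $, point $ y \in Y $, and sheaf $ F \in \Sh(Y) $, the stalk $ \yupperstar F $ is the colimit of $ F(U) $ over the poset of open neighborhoods $ U $ of $ y $, taken in the opposite order. This formula holds for sheaves of anima because the stalk functor is the left adjoint $ \yupperstar $: it is the composite of the left Kan extension of the presheaf, evaluated at $ y $, with sheafification. Sheafification does not change stalks, since $ \Sh(Y) \inclusion \PSh(Y) $ is a left exact localization whose stalk functors factor through it.

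The first step is cofinality. The space $ \upbeta(S) $ is profinite, so its clopen subsets form a basis, and the clopen neighborhoods of $ \mu $ are cofinal among all open neighborhoods. The opposite of the clopen-neighborhood poset is filtered, since it is closed under finite intersections. Colimits indexed by a cofinal subposet agree, so
\begin{equation*}
	\muupperstar F \equivalent \colim_{U \ni \mu,\ U \text{ clopen}} F(U) \period
\end{equation*}
By \Cref{rec:clopens_of_betaS}, the clopens of $ \upbeta(S) $ containing $ \mu $ correspond exactly to the sets $ \overline{S}_0 $ with $ S_0 \in \mu $. This identification is an isomorphism of posets $ \mu \equivalent \{\text{clopen neighborhoods of } \mu\} $.

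Next, take $ F = \jlowerstar X $ for $ X = (X_s)_{s \in S} $. By \Cref{obs:formula_for_jlowerstar_on_clopens}, $ \jlowerstar(X)(\overline{S}_0) \equivalent \prod_{s \in S_0} X_s $. Hence
\begin{equation*}
	\muupperstar \jlowerstar X \equivalent \colim_{S_0 \in \mu^{\op}} \prod_{s \in S_0} X_s = \int_S X_s \dmu \comma
\end{equation*}
which is the desired identification on objects.

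The main obstacle is making this identification natural in $ X $ as a statement about functors, rather than just objectwise. To handle it, I would express both sides as composites of functors. The ultraproduct is defined as right Kan extension along $ S \inclusion \Sub(S)^{\op} $, then restriction to $ \mu^{\op} $, then $ \colim $. On the sheaf side, $ \jlowerstar $ followed by restriction to clopens, using $ \Clopen(\upbeta(S)) \equivalent \Sub(S) $, is the functor $ \An^S \equivalent \Sh(S) \to \Fun(\Sub(S)^{\op},\An) $. This is exactly the right Kan extension defining the equivalence $ \Sh(S) \equivalent \An^S $, so it matches the first functor in the ultraproduct composite. The stalk formula is natural in $ F $, being a composite of restriction along the inclusion of neighborhoods and $ \colim $, so the cofinality step gives a natural equivalence of functors. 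Composing these natural equivalences yields the claimed equivalence of functors $ \An^S \to \An $.
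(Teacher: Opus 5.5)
Your proposal is correct and follows the same route as the paper. Both write $\muupperstar$ as a colimit over open neighborhoods of $\mu$, pass to the cofinal clopen neighborhoods $\overline{S}_0$ with $S_0 \in \mu$ via \Cref{rec:clopens_of_betaS}, and apply \Cref{obs:formula_for_jlowerstar_on_clopens}; your naturality argument (identifying $\jlowerstar$ followed by restriction to clopens with the right Kan extension along $S \inclusion \Sub(S)^{\op}$) makes explicit what the paper's chain of natural equivalences leaves implicit.
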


\noindent Said differently, the stalks of the sheaf $ \jlowerstar(X) $ are the ultraproducts $ \int_S X_s \dmu $.

\begin{proof}
	Let $ (X_s)_{s \in S} \in \An^S $ be an $ S $-indexed collection of anima corresponding to a sheaf $ X \in \Sh(S) $.
	Since the clopen subsets of $ \upbeta(S) $ form a basis for the topology on $ \upbeta(S) $, we see that we have natural equivalences
	\begin{align*}
		\muupperstar \jlowerstar(X) &= \colim_{U \ni \mu} \jlowerstar(X)(U) \\
		&\equivalent \colim_{\overline{S}_0 \ni \mu} \jlowerstar(X)(\overline{S}_0) && \textup{(\Cref{rec:clopens_of_betaS})} \\ 
		&\equivalent \colim_{\overline{S}_0 \ni \mu} \prod_{s \in S_0} X_s && \textup{(\Cref{obs:formula_for_jlowerstar_on_clopens})} \\
		&\equivalent \colim_{S_0 \in \mu^{\op}} \prod_{s \in S_0} X_s && \textup{(definition of the topology on $\upbeta(S)$)} \\
		&= \int_S X_s \dmu \period && \qedhere
	\end{align*}
\end{proof}


\subsection{The Łoś ultraproduct theorem \& consequences}\label{subsec:the_Los_ultraproduct_theorem_and_consequences}

In our mind, the following is the most fundamental formulation of the Łoś ultraproduct theorem:

\begin{proposition}[{(fundamental Łoś ultraproduct theorem, \SAG{Proposition}{E.3.3.8})}]\label{prop:fundamental_Los_ultraproduct_theorem}
	Let $ S $ be a set and let $ \mu \in \upbeta(S) $ be an ultrafilter on $ S $.
	Then the ultraproduct functor
	\begin{equation*}
		\int_S (-)\dmu \colon \An^S \to \An
	\end{equation*}
	is a morphism of \pretopoi, i.e., preserves finite limits, finite coproducts, and effective epimorphisms.
\end{proposition}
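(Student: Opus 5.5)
We will verify the three properties of the functor $ \int_S (-)\dmu \colon \An^S \to \An $, which by definition is the composite of the product-extension functor $ \An^S \to \Fun(\mu^{\op},\An) $, $ (X_s) \mapsto (S_0 \mapsto \prod_{s \in S_0} X_s) $, followed by $ \colim_{\mu^{\op}} $. The basic structural facts are: $ \mu^{\op} $ is filtered (an ultrafilter is closed under finite intersections); filtered colimits in $ \An $ commute with finite limits; and products in $ \An $ commute with all limits.

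\emph{Finite limits.} For each $ S_0 \in \mu $, the functor $ (X_s) \mapsto \prod_{s \in S_0} X_s $ preserves all limits. Limits in $ \Fun(\mu^{\op},\An) $ are computed pointwise, so the first functor in the composite preserves limits. The second functor, $ \colim_{\mu^{\op}} $, is a filtered colimit in $ \An $ and therefore commutes with finite limits. Hence the composite is left exact.

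\emph{Finite coproducts.} The empty coproduct is clear: the constant family $ \emptyset $ has $ \prod_{s\in S_0}\emptyset = \emptyset $ for every nonempty $ S_0 $, and $ \emptyset \notin \mu $. For binary coproducts, let $ Z_s = X_s \sqcup Y_s $. For each point $ z = (z_s)_{s \in S_0} $ of $ \prod_{S_0} Z_s $, set
\[
A = \{ s : z_s \in X_s \}, \qquad B = S_0 \smallsetminus A .
\]
Since $ \mu $ is an ultrafilter, exactly one of $ A $ and $ B $ lies in $ \mu $. This suggests the decomposition
\[
\prod_{s \in S_0} Z_s \;\simeq\; \coprod_{S_0 = A \sqcup B} \prod_{s \in A} X_s \times \prod_{s \in B} Y_s .
\]
Passing to the colimit over $ \mu^{\op} $, each summand with $ A \in \mu $ maps into $ \int X \dmu $ by restricting to $ A $, and similarly for $ B $. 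One checks that the resulting map $ \int Z \dmu \to \int X \dmu \sqcup \int Y \dmu $ is inverse to the canonical map.

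To organize this cleanly, we first reduce to the universal case. Any morphism $ Z_s \to \{0,1\} $, giving the decomposition into $ X_s $ and $ Y_s $, induces $ \int Z \dmu \to \int \{0,1\} \dmu $. By the finite-limit part, $ \int X \dmu $ and $ \int Y \dmu $ are the fibers over the two points. It then suffices to prove two facts:
\begin{itemize}
	\item[(i)] $ \int_S \{0,1\} \dmu \simeq \{0,1\} $. This is exactly the ultrafilter property: the colimit of $ \Map(S_0,\{0,1\}) $ over $ S_0 \in \mu $ is the set of subsets modulo $ \mu $-equivalence, which is two points.
	\item[(ii)] The ultraproduct of the constant family $ \{0,1\} $ is computed as stated, and the fibers decompose as claimed. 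This follows from universality of coproducts in $ \An $ together with left exactness.
\end{itemize}

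\emph{Effective epimorphisms.} In $ \An $, a map is an effective epimorphism if and only if it is surjective on $ \uppi_0 $. Since filtered colimits of effective epimorphisms are effective epimorphisms, it suffices to show that for each $ S_0 $, the map $ \prod_{S_0} X_s \to \prod_{S_0} Y_s $ is surjective on $ \uppi_0 $. This holds because $ \uppi_0 $ commutes with products in $ \An $, and a product of surjections of sets is surjective by the axiom of choice.

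\emph{Main obstacle.} The only step requiring genuine ultrafilter input is the coproduct statement, specifically computation (i) together with the fiber reduction. We expect the main care to lie in justifying that reduction rigorously, using the already-established left exactness and the fact that coproducts in $ \An $ are disjoint and universal.
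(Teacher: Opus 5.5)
Your argument is correct. The paper gives no proof of this statement: it cites Lurie's \emph{Spectral Algebraic Geometry} (Proposition E.3.3.8) and uses the result as a black box to derive the sheaf-theoretic version. So your proposal is a self-contained replacement rather than a variant of an argument in the text.

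All three steps are sound.
\begin{itemize}
\item Left exactness holds because each $\prod_{s\in S_0}$ preserves limits and $\mu^{\mathrm{op}}$ is filtered.
\item Effective epimorphisms are preserved because $\pi_0$ commutes with products and filtered colimits in $\An$, combined with the axiom of choice.
\item Finite coproducts follow from the reduction to $\int_S\{0,1\}\dmu\simeq\{0,1\}$.
\end{itemize}

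Step (ii) is worded vaguely and should say two things explicitly. First, left exactness identifies the fiber of $\int_S Z_s\dmu\to\int_S\{0,1\}\dmu$ over $\int_S\{0\}\dmu$ with $\int_S X_s\dmu$ \emph{via $\int$ of the summand inclusions} $X_s\to Z_s$, and similarly for $Y$. Second, part (i) includes the fact that $\int_S\{0\}\dmu$ and $\int_S\{1\}\dmu$ are the two distinct points. With these in place, universality of coproducts in $\An$ shows that the canonical map $\int_S X_s\dmu\sqcup\int_S Y_s\dmu\to\int_S Z_s\dmu$ is an equivalence. The heuristic decomposition of $\prod_{S_0}Z_s$ is then not needed at all.

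What your route gives beyond the citation is a clear view of where the ultrafilter hypothesis enters. The finite-limit and effective-epimorphism parts hold verbatim for reduced products along any filter. Maximality of $\mu$ is used only for coproducts: for a proper filter $\mathcal{F}$ that is not an ultrafilter, $\int\{0,1\}$ is the quotient Boolean algebra $\mathcal{P}(S)/\mathcal{F}$, which has more than two elements.
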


\noindent \Cref{prop:fundamental_Los_ultraproduct_theorem} has the following immediate consequence:

\begin{corollary}[(categorical logic formulation of the Łoś ultraproduct theorem)]\label{cor:categorical_logic_Los_ultraproduct_theorem}
	Let $ \Ccal $ be \apretopos (see \Cref{def:pretopos}).
	Then the full subcategory
	\begin{equation*}
		\Funpre(\Ccal,\An) \subset \Fun(\Ccal,\An)
	\end{equation*}
	spanned by the morphisms of \pretopoi is closed under the formation of ultraproducts.
\end{corollary}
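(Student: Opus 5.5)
The plan is to reduce the statement directly to \Cref{prop:fundamental_Los_ultraproduct_theorem} by computing ultraproducts in $\Fun(\Ccal,\An)$ objectwise. Fix a set $ S $, an ultrafilter $ \mu \in \upbeta(S) $, and a family $ (F_s)_{s \in S} $ of morphisms of \pretopoi $ F_s \colon \Ccal \to \An $. Since $ \Fun(\Ccal,\An) $ has products and filtered colimits and both are computed pointwise, the ultraproduct $ F \colonequals \int_S F_s \dmu $ is the functor
\begin{equation*}
	C \mapsto \int_S F_s(C) \dmu \period
\end{equation*}
Equivalently, $ F $ is the composite
\begin{equation*}
	\begin{tikzcd}
		\Ccal \arrow[r, "(F_s)_{s}"] & \An^S \arrow[r, "\int_S (-)\dmu"] & \An \period
	\end{tikzcd}
\end{equation*}

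I would then check the \pretopos structure on each factor. The first functor $ \Ccal \to \An^S $ preserves finite limits, finite coproducts, and effective epimorphisms, because each $ F_s $ does and all three notions in the product \category $ \An^S $ are detected componentwise. For effective epimorphisms this needs a word: a map in $ \An^S \equivalent \Sh(S) $ is an effective epimorphism if and only if it is one on each stalk, i.e., $ \uppi_0 $-surjective in each component. The second functor, $ \int_S (-)\dmu $, is a morphism of \pretopoi by \Cref{prop:fundamental_Los_ultraproduct_theorem}. Since morphisms of \pretopoi are closed under composition, $ F $ lies in $ \Funpre(\Ccal,\An) $.

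The only step that needs any care is identifying the ultraproduct formed in $ \Fun(\Ccal,\An) $ with the pointwise one. This is immediate: the ultraproduct is defined as a filtered colimit of products, and both limits and colimits in functor \categories are computed pointwise, so there is no genuine obstacle. The rest is formal.
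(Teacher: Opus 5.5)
Your proposal is correct and is essentially the paper's argument: the paper gives no separate proof and simply calls the statement an immediate consequence of \Cref{prop:fundamental_Los_ultraproduct_theorem}. Your factorization of the pointwise ultraproduct as $\Ccal \to \An^S$ followed by $\int_S (-)\dmu$, with both factors morphisms of \pretopoi, is exactly the routine verification that this word ``immediate'' leaves implicit.
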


\begin{remark}[(the usual formulation of the Łoś ultraproduct theorem)]\label{rem:the_usual_Los_ultraprpoduct_theorem}
	The usual Łoś ultraproduct theorem in logic can be deduced from \Cref{cor:categorical_logic_Los_ultraproduct_theorem}.
	Let us sketch how to do so.
	First, \Cref{cor:categorical_logic_Los_ultraproduct_theorem} implies the following $ 1 $-categorical statement: if $ \Ccal $ is a $ 1 $-pretopos, then the 
	full subcategory
	\begin{equation*}
		\Funpre(\Ccal,\Set) \subset \Fun(\Ccal,\Set)
	\end{equation*}
	of morphisms of $ 1 $-pretopoi (i.e., left exact functors that preserve effective epimorphisms and finite coproducts) is closed under ultraproducts.
	See \cite[Theorem 2.1.1]{Ultracategories} for this formulation.

	Let $ T $ be a coherent theory with classifying $ 1 $-topos $ \Set[T] $.
	Let $ \Syn_0(T) $ denote the full subcategory spanned by the coherent objects of $ \Set[T] $.
	Then $ \Syn_0(T) $ is a $ 1 $-pretopos, often referred to as the \textit{weak syntactic category} \cite[p. 2]{Lurie:Categorical_logic-2} or \textit{coherent syntactic category} \cites[Definition 1.4.1]{MR3752150} of $ T $.
	In this case, $ \Funpre(\Syn_0(T),\Set) $ is the $ 1 $-category $ \Mod_T $ of models of $ T $, and the categorical logic formulation of the Łoś ultraproduct theorem says that $ \Mod_T $ is closed under ultraproducts in the larger $ 1 $-category $ \Fun(\Syn_0(T),\Set) $.
	From this statement, it is relatively straightforward to prove the usual statement of the Łoś ultraproduct theorem; see \cites{MR900266}{nLab:Los_ultraproduct_theorem} for details.
\end{remark}

From the sheaf-theoretic description of ultraproducts (\Cref{prop:stalks_of_jlowerstar}), we can give an equivalent formulation of the fundamental Łoś ultraproduct theorem:

\begin{corollary}[(sheaf-theoretic formulation of the Łoś ultraproduct theorem)]\label{cor:sheaf-theoretic_Los_ultraproduct_theorem}
	For any set $ S $, the pushforward functor $ \jlowerstar \colon \Sh(S) \inclusion \Sh(\upbeta(S)) $ is a morphism of \pretopoi.
\end{corollary}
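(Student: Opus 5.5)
The plan is to reduce the claim to \Cref{prop:fundamental_Los_ultraproduct_theorem} by checking everything stalkwise on $ \upbeta(S) $. Finite limits are automatic: $ \jlowerstar $ is a right adjoint, so it preserves all limits. For finite coproducts and effective epimorphisms, I would first note that $ \upbeta(S) $ is a profinite set (a compact Hausdorff space with a basis of clopens). Hence $ \Sh(\upbeta(S)) $ has enough points, given by the points $ \muupperstar $ for $ \mu \in \upbeta(S) $, and these jointly detect equivalences. The same points also detect effective epimorphisms, since a morphism is an effective epimorphism if and only if its $ 0 $-truncation map on $ \uppi_0 $ is an epimorphism of sheaves of sets, and that can be tested on stalks because each $ \muupperstar $ commutes with truncation. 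One point needs care: for detecting equivalences between non-truncated objects one wants hypercompleteness. I would sidestep this by only using points to test equivalences of $ 0 $-truncated or $ (-1) $-truncated objects where it is needed, or by citing that $ \Sh $ of a profinite set is Postnikov complete (it has covering dimension $ 0 $).

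For coproducts, let $ X, Y \in \Sh(S) $. I want the canonical map $ \jlowerstar X \sqcup \jlowerstar Y \to \jlowerstar(X \sqcup Y) $ to be an equivalence, and the initial object to be preserved. The stalk functor $ \muupperstar $ preserves colimits. By \Cref{prop:stalks_of_jlowerstar}, $ \muupperstar \jlowerstar $ is the ultraproduct $ \int_S(-)\dmu $, which preserves finite coproducts by \Cref{prop:fundamental_Los_ultraproduct_theorem}. So the map is an equivalence on every stalk. To conclude it is an equivalence globally without worrying about hypercompleteness, I would argue directly: coproducts in $ \Sh(\upbeta(S)) $ can be computed on the clopen basis as sheafified products, and $ \jlowerstar(X \sqcup Y)(\overline{S}_0) = \prod_{s \in S_0}(X_s \sqcup Y_s) $. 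Alternatively, I would use that Postnikov completeness of $ \Sh(\upbeta(S)) $ lets stalks detect equivalences.

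For effective epimorphisms, let $ f \colon X \to Y $ be an effective epimorphism in $ \Sh(S) $, i.e., $ \uppi_0 X_s \to \uppi_0 Y_s $ is surjective for every $ s $. By \Cref{prop:stalks_of_jlowerstar} and \Cref{prop:fundamental_Los_ultraproduct_theorem}, the map on stalks at each $ \mu $ is an effective epimorphism of anima. Since effective epimorphisms in a topos with enough points are detected on points, $ \jlowerstar f $ is an effective epimorphism. The main obstacle is exactly this justification that points of $ \Sh(\upbeta(S)) $ detect the relevant properties. For effective epimorphisms it is clean: surjectivity of the sheaf of sets $ \uppi_0 $ is local, and a section over a clopen lifts locally if it does so at stalks, by compactness and the clopen basis. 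For the coproduct equivalence, I would prefer the hands-on clopen-basis argument over invoking hypercompleteness.
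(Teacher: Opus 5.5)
Your proposal is correct and follows essentially the same route as the paper: test on the stalk functors $\muupperstar$, identify $\muupperstar \jlowerstar$ with $\int_S (-)\dmu$ via \Cref{prop:stalks_of_jlowerstar}, and invoke \Cref{prop:fundamental_Los_ultraproduct_theorem}. The only difference is that the paper simply asserts the stalks are jointly conservative, while you justify this explicitly: through Postnikov completeness of $\Sh(\upbeta(S))$ for the coproduct comparison, and through $\uppi_0$ on the underlying $1$-topos for effective epimorphisms.
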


\begin{proof}
	Since the stalk functors $ (\muupperstar \colon \Sh(\upbeta(S)) \to \An)_{\mu \in \upbeta(S)} $ are jointly conservative and also morphisms of \pretopoi, $ \jlowerstar $ is a morphism of \pretopoi if and only if for each ultrafilter $ \mu \in \upbeta(S) $, the composite
	\begin{equation*}
		\muupperstar \jlowerstar \colon \An^S \equivalent \Sh(S) \to \An
	\end{equation*}
	is a morphism of \pretopoi.
	By \Cref{prop:stalks_of_jlowerstar}, the composite $ \muupperstar \jlowerstar $ is the ultraproduct functor $ \int_S (-)\dmu $.
	Thus the claim is equivalent to \Cref{prop:fundamental_Los_ultraproduct_theorem}.
\end{proof}

This sheaf-theoretic formulation of the Łoś ultraproduct theorem has pleasant features.
First, it does not explicitly refer to ultrafilters or ultraproducts (indeed, it does not reference a specific construction of the Čech--Stone compactification of $ S $).
Second, it has some interesting topos-theoretic consequences that aren't easy to deduce directly from \Cref{prop:fundamental_Los_ultraproduct_theorem} or \Cref{cor:categorical_logic_Los_ultraproduct_theorem}.
Let us now explain such a consequence.

\begin{corollary}\label{cor:extra_adjoint_for_models_of_a_pretopos}
	Let $ S $ be a set and let $ \Ccal $ be \apretopos.
	Then the adjunction
	\begin{equation*}
		\begin{tikzcd}[sep=4em]
			\Fun(\Ccal,\Sh(\upbeta(S))) \arrow[r, "\jupperstar \of -", shift left] & \Fun(\Ccal,\Sh(S)) \equivalent \Fun(\Ccal,\An)^S \arrow[l, "\jlowerstar \of -", shift left, hooked']
		\end{tikzcd}
	\end{equation*}
	restricts to an adjunction
	\begin{equation*}
		\begin{tikzcd}[sep=4em]
			\Funpre(\Ccal,\Sh(\upbeta(S))) \arrow[r, shift left] & \Funpre(\Ccal,\Sh(S)) \equivalent \Funpre(\Ccal,\An)^S \period \arrow[l, shift left, hooked']
		\end{tikzcd}
	\end{equation*}
\end{corollary}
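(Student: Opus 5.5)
The plan is to check that both adjoint functors in the top adjunction send morphisms of \pretopoi\ to morphisms of \pretopoi. Once this is done, the restricted functors are still adjoint, because $ \Funpre(\Ccal,-) $ is a full subcategory of $ \Fun(\Ccal,-) $ on both sides. The unit and counit of the original adjunction then restrict as well. The right adjoint remains fully faithful, since the counit $ \jupperstar \jlowerstar \to \id $ is already an equivalence.

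For the left adjoint, postcomposition with $ \jupperstar $ preserves $ \Funpre $. The reason is that $ \jupperstar \colon \Sh(\upbeta(S)) \to \Sh(S) $ is the pullback functor of a geometric morphism. It therefore preserves finite limits (it is left exact) and all colimits (it is a left adjoint). In particular it preserves finite coproducts and effective epimorphisms, since effective epimorphisms are defined in terms of finite limits and colimits (Čech nerves and geometric realizations). So it is a morphism of \pretopoi, and composites of morphisms of \pretopoi\ are morphisms of \pretopoi.

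For the right adjoint, postcomposition with $ \jlowerstar $ preserves $ \Funpre $ by \Cref{cor:sheaf-theoretic_Los_ultraproduct_theorem}: $ \jlowerstar $ is a morphism of \pretopoi, so $ \jlowerstar \of F $ is one whenever $ F $ is. This is the only nonformal input, and it is exactly where the Łoś theorem enters, so I expect no real obstacle beyond invoking it.

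The last point to settle is the identification $ \Funpre(\Ccal,\Sh(S)) \equivalent \Funpre(\Ccal,\An)^S $. Under $ \Sh(S) \equivalent \An^S $, finite limits, finite coproducts and effective epimorphisms are all computed pointwise. These are detected by the jointly conservative stalk functors, each of which is itself a morphism of \pretopoi. Hence a functor $ \Ccal \to \An^S $ is a morphism of \pretopoi\ exactly when each of its components is.
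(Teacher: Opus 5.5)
Your proposal is correct and follows the paper's argument. The paper's proof says only that the result is immediate from \Cref{cor:sheaf-theoretic_Los_ultraproduct_theorem}. You have supplied the formal steps it leaves implicit: $ \jupperstar $ is a morphism of \pretopoi\ because it is a left exact left adjoint; an adjunction restricts to full subcategories that both functors preserve; and $ \Funpre(\Ccal,\Sh(S)) $ is identified with $ \Funpre(\Ccal,\An)^S $ pointwise.
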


\begin{proof}
	Immediate from \Cref{cor:sheaf-theoretic_Los_ultraproduct_theorem}.
\end{proof}

\begin{corollary}\label{cor:restriction_from_PtXbetaS_to_PtXS_admits_a_right_adjoint}
	Let $ \Xcal $ be a coherent \topos and let $ S $ be a set.
	Then the restriction functor
	\begin{equation*}
		\begin{tikzcd}[sep=3.5em]
			\Funupperstar(\Xcal,\Sh(\upbeta(S))) \arrow[r, "\jupperstar \of -"] & \Funupperstar(\Xcal,\Sh(S)) \equivalent \Pt(\Xcal)^S 
		\end{tikzcd}
	\end{equation*}
	admits a fully faithful right adjoint.
\end{corollary}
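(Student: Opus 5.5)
The plan is to reduce to \Cref{cor:extra_adjoint_for_models_of_a_pretopos} by passing from the coherent \topos $ \Xcal $ to a \pretopos of coherent objects.
Let $ \Ccal \subset \Xcal $ denote the full subcategory of coherent objects; write $ \Ccal $ for its truncated part $ \Xcal\cohtrun $ if needed so that it is \apretopos.
By the classification of bounded coherent \topoi recalled in \Cref{app:complements_on_topoi}, restriction along $ \Ccal \inclusion \Xcal $ identifies $ \Funupperstar(\Xcal',\Ycal) $ with $ \Funpre(\Ccal,\Ycal) $ for $ \Ycal $ any bounded \topos, where $ \Xcal' = \Sh_{\eff}(\Ccal) $ is the bounded reflection of $ \Xcal $.
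The targets $ \Sh(\upbeta(S)) $ and $ \Sh(S) \equivalent \An^S $ are both sheaves on compact Hausdorff/discrete spaces, hence (after checking) their algebraic morphisms from $ \Xcal $ factor through the relevant truncated/bounded part; more carefully, I would use that for any \topos $ \Ycal $, left exact left adjoints $ \Xcal \to \Ycal $ restrict to morphisms of \pretopoi $ \Ccal \to \Ycal $, and this restriction is an equivalence onto $ \Funpre(\Ccal,\Ycal) $ whenever $ \Xcal $ is the bounded coherent \topos generated by $ \Ccal $ and $ \Ycal $ is bounded, or more generally compare via Postnikov completeness.

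Assuming these identifications $ \Funupperstar(\Xcal,\Sh(\upbeta(S))) \equivalent \Funpre(\Ccal,\Sh(\upbeta(S))) $ and $ \Funupperstar(\Xcal,\Sh(S)) \equivalent \Funpre(\Ccal,\An)^S $, natural in the target, the restriction functor $ \jupperstar \of - $ corresponds to postcomposition with $ \jupperstar $ on \pretopos morphisms (because $ \jupperstar $ preserves coherent-object-level structure: finite limits, colimits, effective epis). Then \Cref{cor:extra_adjoint_for_models_of_a_pretopos} provides the right adjoint $ \jlowerstar \of - $, which is legitimate precisely by the sheaf-theoretic Łoś theorem \Cref{cor:sheaf-theoretic_Los_ultraproduct_theorem}. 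Full faithfulness follows since the counit $ \jupperstar \jlowerstar \to \id $ is an equivalence on $ \Sh(S) $ ($ j $ being an open embedding, or just fully faithfulness of $ \jlowerstar $ in the displayed adjunction), and postcomposition preserves this pointwise.

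The main obstacle is the first step: the identification of $ \Funupperstar(\Xcal,-) $ with $ \Funpre(\Ccal,-) $ when $ \Xcal $ is merely coherent rather than bounded coherent, i.e.\ ensuring the targets $ \Sh(\upbeta(S)) $ and $ \An^S $ see no difference between $ \Xcal $ and its bounded reflection. I would try to argue that a left exact left adjoint into a Postnikov complete \topos (both targets are hypercomplete, having enough points / finite covering dimension for $ \upbeta(S) $) factors uniquely through the Postnikov completion of $ \Xcal $, which by the appendix is determined by $ \Ccal $; if this is awkward, an alternative is to define the right adjoint directly as $ \flowerstar \mapsto $ the unique algebraic morphism extending $ \jlowerstar \of \fupperstar|_{\Ccal} $, checking the adjunction on coherent objects which generate $ \Xcal $ under colimits.
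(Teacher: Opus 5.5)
Your proposal is correct and follows the paper's proof. The paper identifies both sides with $\Funpre(\Xcal\cohtrun,-)$ using the appendix result that algebraic morphisms out of $\Xcal$ into a bounded, Postnikov complete target such as $\Sh(\upbeta(S))$ or $\Sh(S)$ factor through $\Xcal^{\mathrm{post}}$ and then $\Xcal^{\b}$; this is exactly the route you flag as your ``main obstacle''. It then applies the pretopos-level adjunction of \Cref{cor:extra_adjoint_for_models_of_a_pretopos}, which rests on \Cref{cor:sheaf-theoretic_Los_ultraproduct_theorem}. One small correction: boundedness of the target is what lets you pass from $\Xcal^{\mathrm{post}}$ to $\Xcal^{\b}$, while the restriction equivalence for the bounded coherent $\Xcal^{\b}$ holds for any target topos.
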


\begin{proof}
	Since $ \Sh(\upbeta(S)) $ and $ \Sh(S) $ are both bounded and Postnikov complete, by \Cref{prop:Funupperstar_to_a_bounded_Postnikov_complete_topos}, the above functor is identified with the functor 
	\begin{equation*}
		\begin{tikzcd}[sep=3.5em]
			\Funpre(\Xcal\cohtrun,\Sh(\upbeta(S))) \arrow[r, "\jupperstar \of -"]  & \Funpre(\Xcal\cohtrun,\Sh(S)) \period
		\end{tikzcd}
	\end{equation*}
	By \Cref{cor:extra_adjoint_for_models_of_a_pretopos}, this functor admits a fully faithful right adjoint given by post-composition with $ \jlowerstar \colon \Sh(S) \inclusion \Sh(\upbeta(S)) $.
\end{proof}

\begin{remark}
	Di Liberti \cite[\S1]{arXiv:2211.03104} and Di Liberti--Ye \cite[\S4.4]{arXiv:2504.16690} have studied $ 1 $-topoi satisfying the conclusion of \Cref{cor:restriction_from_PtXbetaS_to_PtXS_admits_a_right_adjoint}.
\end{remark}


\section{Classifying anima of condensed \texorpdfstring{$\infty$}{∞}-categories of points}\label{sec:classifying_anima_of_condensed_categories_of_points}

In this section, we prove the main result of this note (\Cref{thm:extra_adjoint_for_spectral_topoi}).
In order to do so, we begin in \cref{subsec:condensed_categories_of_points} by recalling the basics of the condensed \categories of points that we consider.
In \cref{subsec:classifying_anima_of_free_colimit_completions}, we record some facts about classifying anima that we need.
\Cref{subsec:the_case_of_spectral_topoi} proves our main result.


\subsection{Condensed \texorpdfstring{$\infty$}{∞}-categories of points}\label{subsec:condensed_categories_of_points}

We now define the two condensed \categories of points relevant to this note.
It is important that our condensed \categories are not just valued in large \categories, but in \textit{accessible} \categories.
So we begin by recalling some accessibility results in topos theory.

\begin{recollection}
	Let $ \Xcal $ and $ \Ycal $ be \topoi.
	Then 
	\begin{equation*}
		\Funupperstar(\Xcal,\Ycal) \subset \Fun(\Xcal,\Ycal)
	\end{equation*}
	is an accessible subcategory \HTT{Proposition}{6.3.1.13}.
	Moreover, since filtered colimits in \atopos commute with finite limits, $ \Funupperstar(\Xcal,\Ycal) $ is closed under filtered colimits in $ \Fun(\Xcal,\Ycal) $
	As a consequence, for all left exact left adjoints $ \fupperstar \colon \Wcal \to \Xcal $ and $ \gupperstar \colon \Ycal \to \Zcal $, the induced functors 
	\begin{equation*}
		\begin{tikzcd}[sep=3em]
			\Funupperstar(\Xcal,\Ycal) \arrow[r, "- \of \fupperstar"] & \Funupperstar(\Wcal,\Zcal)
		\end{tikzcd}
		\andeq
		\begin{tikzcd}[sep=3em]
			\Funupperstar(\Xcal,\Ycal) \arrow[r, "\gupperstar \of -"] & \Funupperstar(\Xcal,\Zcal)
		\end{tikzcd}
	\end{equation*}
	preserve filtered colimits (in particular, are accessible).
\end{recollection}

\begin{notation}\label{ntn:Acc}
	We write $ \Acc \subset \CATinfty $ for the non-full \category with objects accessible \categories and morphisms accessible functors.
	Let $ \Accfil \subset \Acc $ denote the non-full subcategory with objects accessible \categories that admit filtered colimits and morphisms functors that preserve filtered colimits.
\end{notation}

\begin{recollection}\label{rec:Acc_admits_limits}
	The \category $ \Acc $ admits limits and the forgetful functor $ \Acc \to \CATinfty $ preserves limits \kerodon{06LQ}.
	Similarly, the \category $ \Accfil $ admits limits and the forgetful functor $ \Accfil \to \CATinfty $ preserves limits.
\end{recollection}

\begin{notation}
	Write $ \Extr $ for the category of extremally disconnected profinite sets.
	Given \acategory $ \Ecal $ with finite products, the \category of \defn{condensed objects} of $ \Ecal $ is the full subcategory
	\begin{equation*}
		\Cond(\Ecal) \subset \Fun(\Extr^{\op},\Ecal)
	\end{equation*}
	spanned by the finite product-preserving presheaves.%
	\footnote{Of course, one needs to be careful about the usual subtleties with size issues. 
	We choose to deal with these by using universes.
	See \cite[Remark 2.36]{arXiv:2510.07443} for a detailed discussion about why how one chooses to deal with this does not affect the results in an essential way.}
\end{notation}

The first condensed \category of points we consider makes sense for \textit{any} \topos.
Recall that we write $ \RTop $ for the \category with objects \topoi and morphisms the geometric morphisms, i.e., right adjoints whose left adjoint is left exact.

\begin{definition}[{(condensed \category of points)}]\label{def:condensed_category_of_points}
	We write 
	\begin{align*}
		\Ptbf \colon \RTop &\longrightarrow \Cond(\Accfil) \\ 
	\shortintertext{for the functor}
		\Xcal &\longmapsto [K \mapsto \Funupperstar(\Xcal,\Sh(K))] \period
	\end{align*}
	We refer to $ \Ptbf(\Xcal) $ as the \defn{condensed \category of points} of $ \Xcal $.
\end{definition}

\begin{nul}
	Of course, the \category of global sections of $ \Ptbf(\Xcal) $ is simply the \category $ \Pt(\Xcal) $ of points of $ \Xcal $.
\end{nul}

\begin{remark}\label{obs:Ptbf_only_depends_on_bounded_reflection}
	Let $ \Xcal $ be a \topos.
	Then by \Cref{prop:Funupperstar_to_a_bounded_Postnikov_complete_topos}, the natural geometric morphism $ \Xcal \to \Xcal^{\b} $ to the bounded reflection (see \Cref{rec:bounded_topoi}) induces an equivalence
	\begin{equation*}
		\Ptbf(\Xcal) \equivalence \Ptbf(\Xcal^{\b}) \period
	\end{equation*}
\end{remark}

\begin{remark}[{(the \categorical enhancement of Lurie's work on ultracategories)}]
	In Lurie's work on ultracategories, he introduces the functor $ \Ptbf $, at least in the setting of $ 1 $-topoi.
	One of Lurie's main results is that when restricted to coherent $ 1 $-topoi and all geometric morphisms, this functor is fully faithful \cite[Remark 2.3.4, Theorem 4.3.3, \& Remark 4.3.4]{Ultracategories}.
	In light of \Cref{obs:Ptbf_only_depends_on_bounded_reflection}, the correct \categorical enhancement of this statement requires a bit more care.
	We expect that the functor
	\begin{equation*}
		\Ptbf \colon \left(\substack{\textup{bounded coherent \topoi} \\ \textup{and all geometric morphisms}} \right) \longrightarrow \Cond(\Accfil)
	\end{equation*}
	should be fully faithful (even as a functor of $ (\infty,2) $-categories).
	Moreover, this functor should factor through \acategorical enhancement of ultracategories and left ultrafunctors (formulated using \textit{ultracategory envelopes} as explained in \cite[\S8]{Ultracategories}).
	Restricting to $ 1 $-localic coherent \topoi would then recover Lurie's result.

	While many of Lurie's arguments work verbatim in the \categorical setting, there seem to be a few places where some nontrivial care is needed in order to generalize Lurie's proof.
\end{remark}

The second condensed \category of points we consider is the full subcategory of $ \Ptbf(\Xcal) $ spanned by the coherent geometric morphisms; it is only well-behaved for coherent \topoi.
We write \smash{$ \RTopcoh \subset \RTop $} for the subcategory with objects coherent \topoi and morphisms the coherent geometric morphisms.
In this case, the relevant accessibility result is that for a coherent \topos $ \Xcal $ and a profinite set $ K $, the \category
\begin{equation*}
	\Funupperstarcoh(\Xcal,\Sh(K))
\end{equation*}
of coherent algebraic morphisms $ \fupperstar \colon \Xcal \to \Sh(K) $ is small and idempotent complete (see \Cref{prop:Funupperstar_to_a_bounded_Postnikov_complete_topos}).

\begin{notation}
	Write $ \Catidem \subset \Catinfty $ for the full subcategory spanned by the idempotent complete \categories.
	Note that the small accessible \categories are exactly the small idempotent complete \categories \kerodon{06KS}, and every functor out of a small idempotent complete \category is accessible \kerodon{06KW}.
	So \smash{$ \Catidem $} is also a full subcategory of $ \Acc $.
\end{notation}

\begin{definition}[{(condensed \category of coherent points)}]\label{def:condensed_category_of_coherent_points}
	We write 
	\begin{align*}
		\Ptcohbf \colon \RTopcoh &\longrightarrow \Cond(\Catidem) \\ 
	\shortintertext{for the functor}
		\Xcal &\longmapsto [K \mapsto \Funupperstarcoh(\Xcal,\Sh(K))] \period
	\end{align*}
	We refer to \smash{$ \Ptcohbf(\Xcal) $} as the \defn{condensed \category of coherent points} of $ \Xcal $.
\end{definition}

\begin{remark}\label{obs:Ptcohbf_only_depends_on_bounded_reflection}
	Let $ \Xcal $ be a coherent \topos.
	Then by \Cref{prop:Funupperstar_to_a_bounded_Postnikov_complete_topos}, the natural geometric morphism $ \Xcal \to \Xcal^{\b} $ to the bounded reflection induces an equivalence
	\begin{equation*}
		\Ptcohbf(\Xcal) \equivalence \Ptcohbf(\Xcal^{\b}) \period
	\end{equation*}
\end{remark}

\begin{observation}
	Since the inclusions
	\begin{equation*}
		\Accfil \subset \Acc \andeq \Catidem \subset \Acc
	\end{equation*}
	both preserve limits, we can regard the functors $ \Ptbf $ and $ \Ptcohbf $ as valued in the \category $ \Cond(\Acc) $ of condensed accessible \categories.
	Hence for any bounded coherent \topos, by definition, there is a natural inclusion of condensed accessible \categories
	\begin{equation*}
		\Ptcohbf(\Xcal) \inclusion \Ptbf(\Xcal) \period
	\end{equation*}
\end{observation}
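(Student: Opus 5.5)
The plan is to reduce everything to the fact that being a condensed object is a condition on an underlying presheaf, and that condition is preserved by any limit-preserving functor. Recall that $ \Cond(\Ecal) \subset \Fun(\Extr^{\op},\Ecal) $ consists of the presheaves that carry finite coproducts in $ \Extr $ to finite products. If $ G \colon \Ecal \to \Ecal' $ preserves finite products, then post-composition with $ G $ restricts to a functor $ \Cond(\Ecal) \to \Cond(\Ecal') $. So it suffices to check two things: that the inclusions $ \Accfil \subset \Acc $ and $ \Catidem \subset \Acc $ preserve (finite) limits, and that the levelwise inclusions assemble into a morphism in $ \Fun(\Extr^{\op},\Acc) $.

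\textbf{Step 1: the inclusions preserve limits.} By \Cref{rec:Acc_admits_limits}, both $ \Acc \to \CATinfty $ and $ \Accfil \to \CATinfty $ preserve limits. The construction of limits in $ \Acc $ \kerodon{06LQ} gives a further fact: a cone in $ \Acc $ whose image in $ \CATinfty $ is a limit cone is itself a limit cone in $ \Acc $. The point is that a functor into the limit \category is accessible as soon as its components are. Applying this to the image under $ \Accfil \subset \Acc $ of a limit cone in $ \Accfil $ shows that this inclusion preserves limits.

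For $ \Catidem $, I would argue as follows.
\begin{itemize}
	\item Limits of small idempotent complete \categories are computed in $ \Catinfty $, and so in $ \CATinfty $.
	\item Idempotent completeness is stable under limits.
	\item By \kerodon{06KS} and \kerodon{06KW}, $ \Catidem $ is a full subcategory of $ \Acc $.
\end{itemize}
The same detection argument then shows that $ \Catidem \subset \Acc $ preserves limits. I expect this step to be the only real obstacle: it is the place where one has to be careful that limits in $ \Acc $ are detected in $ \CATinfty $, rather than merely preserved by the forgetful functor.

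\textbf{Step 2: the natural inclusion.} Fix a bounded coherent \topos $ \Xcal $. For each $ K \in \Extr $, the inclusion $ \Funupperstarcoh(\Xcal,\Sh(K)) \subset \Funupperstar(\Xcal,\Sh(K)) $ is fully faithful. Its source is small and idempotent complete by \Cref{prop:Funupperstar_to_a_bounded_Postnikov_complete_topos}, so by \kerodon{06KW} it is a morphism of $ \Acc $.

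For naturality, let $ \phi \colon K' \to K $ be a map in $ \Extr $. It suffices to see that the pullback $ \phi^{\ast} \colon \Sh(K) \to \Sh(K') $ preserves coherent objects, since then post-composition with $ \phi^{\ast} $ preserves the full subcategories of coherent algebraic morphisms. This holds because a continuous map of profinite sets induces a coherent geometric morphism: preimages of clopens are clopen. Therefore the levelwise inclusions form a natural transformation of functors $ \Extr^{\op} \to \Acc $, and hence a morphism $ \Ptcohbf(\Xcal) \inclusion \Ptbf(\Xcal) $ in $ \Cond(\Acc) $. Naturality in $ \Xcal $ along coherent geometric morphisms follows in the same way, since pre-composition preserves both subcategories.
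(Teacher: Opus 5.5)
Your proposal is correct and follows the same route as the paper: post-composition with the finite-product-preserving inclusions $\Accfil \subset \Acc$ and $\Catidem \subset \Acc$ carries condensed objects to condensed objects, and the levelwise full inclusions $\Funupperstarcoh(\Xcal,\Sh(K)) \subset \Funupperstar(\Xcal,\Sh(K))$ assemble into a morphism of $\Cond(\Acc)$. The paper gives only this one-line justification; you additionally supply the details it leaves implicit, namely that limits in $\Acc$ are detected in $\CATinfty$, that the levelwise inclusions are accessible, and that pullback along maps of profinite sets preserves coherent objects.
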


\begin{remark}[{(condensed \category of locally coherent points)}]
	There are also many \topoi of interest that are only locally coherent, but not coherent.
	For example, the étale \topos of a scheme that is not qcqs. 
	For this larger class of \topoi, it is better to consider the variant of \smash{$ \Ptcohbf $} that sends $ \Xcal $ to the condensed \category assigning an extremally disconnected profinite set $ K $ to the full subcategory of $ \Funupperstar(\Xcal,\Sh(K)) $ spanned by the \textit{locally coherent} algebraic morphisms.
\end{remark}


\subsection{Classifying anima of free colimit completions}\label{subsec:classifying_anima_of_free_colimit_completions}

We are interested in studying the condensed classifying anima of the condensed \categories $ \Ptbf(\Xcal) $ and \smash{$ \Ptcohbf(\Xcal) $}.
For various reasons, it is useful to know that these take values in small anima.
In order to explain this, we record some basic facts about classifying anima of free colimit completions.
In particular, that the classifying anima of an accessible \category is small (\Cref{cor:classifying_anima_of_free_cocompletions}).

\begin{recollection}
	We write $ \Bup \colon \CATinfty \to \AN $ for the left adjoint to the inclusion $ \AN \inclusion \CATinfty $.
	For \acategory $ \Ccal $, we call $ \Bup \Ccal $ the \defn{classifying anima} of $ \Ccal $.
	We say that a functor $ f \colon \Ccal \to \Dcal $ is a \defn{weak homotopy equivalence} if $ \Bup f $ is an equivalence.
	We say that \category $ \Ccal $ is \defn{weakly contractible} if $ \Bup\Ccal \equivalent \pt $.
	Finally, note that if $ \Ccal $ is a small \category, then $ \Bup \Ccal $ is also small.
\end{recollection}

Our first observation is that every anima admits limits and colimits indexed by weakly contractible \categories:

\begin{lemma}\label{lem:anima_admit_weakly_contractible_(co)limits}
	Let $ X $ be an anima and let $ \Ical $ be a weakly contractible \category.
	Then:
	\begin{enumerate}
		\item\label{lem:anima_admit_weakly_contractible_(co)limits.1} The anima $ X $ admits weakly contractible limits and colimits.

		\item\label{lem:anima_admit_weakly_contractible_(co)limits.2} Let $ f \colon \Ical \to \Ccal $ be a functor to any \category.
		If $ f $ admits a (co)limit, then every functor $ g \colon \Ccal \to X $ preserves this (co)limit.
	\end{enumerate}
\end{lemma}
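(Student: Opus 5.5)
The plan is to reduce both parts to the fact that an anima is an \groupoid, so a diagram $ \Ical \to X $ inverts every morphism of $ \Ical $ and therefore factors essentially uniquely through the localization $ \Ical \to \Bup\Ical $. The universal property of $ \Bup $ as the left adjoint to $ \AN \inclusion \CATinfty $ gives an equivalence $ \Fun(\Ical, X) \equivalent \Fun(\Bup\Ical, X) $. If $ \Ical $ is weakly contractible then $ \Bup\Ical \equivalent \pt $, so the restriction along $ \Ical \to \pt $ is an equivalence
\begin{equation*}
	\delta \colon X \equivalent \Fun(\pt,X) \equivalence \Fun(\Ical,X) \period
\end{equation*}
In other words, the constant diagram functor is an equivalence, and every diagram $ \Ical \to X $ is equivalent to a constant diagram.

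For \eqref{lem:anima_admit_weakly_contractible_(co)limits.1}, I would read off the (co)limit from the equivalence $ \delta $. Take a diagram $ p \colon \Ical \to X $ and write $ p \equivalent \delta(x) $. Then for every $ y \in X $,
\begin{equation*}
	\Map_{\Fun(\Ical,X)}(\delta(y),\delta(x)) \equivalent \Map_X(y,x) \period
\end{equation*}
This says that $ x $, together with the identity cone, is a limit of $ p $. The dual computation shows that $ x $ is also a colimit of $ p $. Equivalently, $ \delta $ is an equivalence, so its inverse is both a left and a right adjoint to $ \delta $. Hence $ \lim $ and $ \colim $ over $ \Ical $ both exist and are given by $ \delta^{-1} $.

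For \eqref{lem:anima_admit_weakly_contractible_(co)limits.2}, suppose $ f \colon \Ical \to \Ccal $ has a limit cone $ \overline{f} \colon \Ical^{\lhd} \to \Ccal $. I would apply $ g $ to obtain a cone $ g\overline{f} \colon \Ical^{\lhd} \to X $ and check that it is a limit cone in $ X $. The key step is to observe that $ \Ical^{\lhd} $ is weakly contractible, since it has an initial object. This holds regardless of $ \Ical $, although here $ \Ical $ is weakly contractible as well. So $ g\overline{f} $ is equivalent to a constant diagram at the value $ x $ on the cone point, and all of its transition maps are equivalences. In particular the cone maps $ x \to g f(i) $ are equivalences. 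By \eqref{lem:anima_admit_weakly_contractible_(co)limits.1}, $ \lim g f \equivalent \delta^{-1}(gf) $. The comparison map $ x \to \lim gf $ then corresponds under $ \delta $ to the natural transformation $ \delta(x) \to gf $ given by the cone, which is an equivalence; so the comparison map is an equivalence. The colimit case is dual, using $ \Ical^{\rhd} $.

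I expect the only real subtlety to be in \eqref{lem:anima_admit_weakly_contractible_(co)limits.2}: identifying the comparison map with $ \delta^{-1} $ of the cone transformation. This step uses weak contractibility of $ \Ical $ itself. Without it, a cone with invertible legs need not be a limit: for example, when $ \Ical $ is a set of two points, the limit is a product, not the common value.
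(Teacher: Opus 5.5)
Your proof is correct and follows the paper's argument: part (1) is the same, since the constant-diagram functor $X \to \Fun(\Ical,X)$ is an equivalence because it factors through $\Fun(\Bup\Ical,X)$ with $\Bup\Ical \simeq \pt$, and you additionally spell out why this gives both limits and colimits. For part (2), the paper takes a shortcut: the comparison map $\colim_{\Ical} gf \to g(\colim_{\Ical} f)$ is a morphism of the anima $X$ and is therefore automatically an equivalence. So your detour through the weak contractibility of the cone-shaped indexing category, and the ``subtlety'' of identifying the comparison map with $\delta^{-1}$ of the cone, is valid but unnecessary; weak contractibility of $\Ical$ is only needed to know that the (co)limit exists in $X$.
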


\begin{proof}
	For \eqref{lem:anima_admit_weakly_contractible_(co)limits.1}, it suffices to show that the constant functor
	\begin{equation*}
		X \to \Fun(\Ical,X)
	\end{equation*}
	is an equivalence.
	Since $ \pt $ is an anima, the unique functor $ \Ical \to \pt $ factors as
	\begin{equation*}
		\begin{tikzcd}
			\Ical \arrow[r, "p"] & \Bup \Ical \arrow[r, "q"] & \pt \period
		\end{tikzcd}
	\end{equation*}
	Moreover, since $ \Ical $ is weakly contractible, $ q $ is an equivalence.
	Hence the constant functor factors as 
	\begin{equation*}
		\begin{tikzcd}
			X \arrow[r, "\qupperstar", "\sim"'{yshift=0.25ex}] & \Fun(\Bup \Ical,X) \arrow[r, "\pupperstar"] & \Fun(\Ical,X) \period
		\end{tikzcd}
	\end{equation*}
	Moreover, since $ X $ is an anima, $ \pupperstar $ is an equivalence.
	Hence the constant functor $ \pupperstar \qupperstar $ is an equivalence, as desired.

	For \eqref{lem:anima_admit_weakly_contractible_(co)limits.2}, note that since $ X \equivalent X^{\op} $, the claims for limits and colimits are dual.
	We prove the claim for colimits.  
	If $ f \colon \Ical \to \Ccal $ admits a colimit, and $ g \colon \Ccal \to X $ is any functor, we need to show that the natural map
	\begin{equation*}
		\textstyle \colim_{\Ical} gf \to g(\colim_{\Ical} f)
	\end{equation*} 
	is an equivalence in $ X $.
	But in $ X $, every map is an equivalence, so there is nothing to prove.
\end{proof}

\begin{notation}
	Let $ \Kcal $ be a collection of small \categories.
	Given a small \category $ \Ccal_0 $, let $ \PSh_{\Kcal}(\Ccal_0) $ be the free cocompletion of $ \Ccal_0 $ under colimits of diagrams indexed by \categories in $ \Kcal $.
	The existence of $ \PSh_{\Kcal}(\Ccal_0) $ follows from \HTT{Proposition}{5.3.6.2}.
	Explicitly, $ \PSh_{\Kcal}(\Ccal_0) $ can be constructed as the smallest full subcategory of presheaves of anima on $ \Ccal_0 $ containing the image of the Yoneda embedding and closed under colimits indexed by \categories in $ \Kcal $. 
\end{notation}

\begin{corollary}\label{cor:classifying_anima_of_free_cocompletions}
	Let $ \Ccal_0 $ be a small \category and let $ \Kcal $ be a collection of small weakly contractible \categories.
	Then the inclusion $ y \colon \Ccal_0 \inclusion \PSh_{\Kcal}(\Ccal_0) $ induces an equivalence
	\begin{equation*}
		\Bup \Ccal_0 \equivalence \Bup(\PSh_{\Kcal}(\Ccal_0)) \period
	\end{equation*}
	In particular, the anima $ \Bup(\PSh_{\Kcal}(\Ccal_0)) $ is small.
\end{corollary}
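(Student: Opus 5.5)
The plan is to show that every object of $ \PSh_{\Kcal}(\Ccal_0) $ lies in a full subcategory on which the comparison is an equivalence, using the universal property of $ \Bup $ together with \Cref{lem:anima_admit_weakly_contractible_(co)limits}.

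By the Yoneda lemma for anima, it suffices to show that for every (large) anima $ X $, restriction along $ y $ induces an equivalence
\begin{equation*}
	\Fun(\PSh_{\Kcal}(\Ccal_0),X) \to \Fun(\Ccal_0,X) \period
\end{equation*}
Here we use that $ \Bup $ is left adjoint to the inclusion of anima, so $ \Map(\Bup\Dcal,X) \equivalent \Map(\Dcal,X) \equivalent \Fun(\Dcal,X)^{\simeq} $, and $ \Fun(\Dcal,X) $ is already an anima.
The first key step is to note the following. By \Cref{lem:anima_admit_weakly_contractible_(co)limits}\eqref{lem:anima_admit_weakly_contractible_(co)limits.1}, $ X $ admits all $ \Kcal $-indexed colimits, since every category in $ \Kcal $ is weakly contractible. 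By \Cref{lem:anima_admit_weakly_contractible_(co)limits}\eqref{lem:anima_admit_weakly_contractible_(co)limits.2}, every functor $ \PSh_{\Kcal}(\Ccal_0) \to X $ automatically preserves $ \Kcal $-indexed colimits. Hence
\begin{equation*}
	\Fun(\PSh_{\Kcal}(\Ccal_0),X) = \Fun_{\Kcal}(\PSh_{\Kcal}(\Ccal_0),X) \comma
\end{equation*}
where the right-hand side is the full subcategory of $ \Kcal $-colimit-preserving functors.

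The second step applies the universal property of the free $ \Kcal $-cocompletion \HTT{Proposition}{5.3.6.2}. For any \category $ X $ admitting $ \Kcal $-indexed colimits, restriction along $ y $ gives an equivalence $ \Fun_{\Kcal}(\PSh_{\Kcal}(\Ccal_0),X) \equivalence \Fun(\Ccal_0,X) $. Combining the two steps gives the desired equivalence, and hence that $ \Bup y $ is an equivalence. The smallness claim then follows because $ \Ccal_0 $ is small, so $ \Bup\Ccal_0 $ is small.

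The main obstacle is a size subtlety. $ \PSh_{\Kcal}(\Ccal_0) $ may be large, so we must work with $ \Bup \colon \CATinfty \to \AN $ and test against large anima $ X $, and \HTT{Proposition}{5.3.6.2} must be applied in a universe where $ \PSh_{\Kcal}(\Ccal_0) $ is small relative to $ X $. This is harmless: the universal property holds for any target admitting $ \Kcal $-colimits, regardless of size.
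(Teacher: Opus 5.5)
Your proposal is correct and follows the paper's proof essentially verbatim: reduce via the universal property of $ \Bup $ to showing that restriction along $ y $ gives an equivalence $ \Fun(\PSh_{\Kcal}(\Ccal_0),X) \to \Fun(\Ccal_0,X) $ for every anima $ X $, use \Cref{lem:anima_admit_weakly_contractible_(co)limits} to see that every such functor automatically preserves $ \Kcal $-indexed colimits, and conclude from the universal property of the free $ \Kcal $-cocompletion. One small point: your opening sentence about showing that every object lies in ``a full subcategory on which the comparison is an equivalence'' describes a strategy you never actually use, so it could simply be dropped.
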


\begin{proof}
	First notice that the final claim follows from the fact that since $ \Ccal_0 $ is small, $ \Bup\Ccal_0 $ is also small.
	For the main claim, we prove that for every anima $ X $, the functor
	\begin{equation*}
		\begin{tikzcd}[row sep=2.5em, column sep=3em]
			\Fun(\Bup(\PSh_{\Kcal}(\Ccal_0)),X) \arrow[r, "-\of \Bup y"] & \Fun(\Bup\Ccal_0,X) 
		\end{tikzcd}
	\end{equation*}
	is an equivalence.
	Notice that by \Cref{lem:anima_admit_weakly_contractible_(co)limits}, $ X $ admits weakly contractible colimits and every functor $ \Dcal \to X $ preserves all weakly contractible colimits that $ \Dcal $ admits.
	Thus we see that the inclusion
	\begin{equation*}
		\Fun^{\Kcolim}(\PSh_{\Kcal}(\Ccal_0),X) \subset \Fun(\PSh_{\Kcal}(\Ccal_0),X)
	\end{equation*}
	of the full subcategory spanned by functors that preserve colimits indexed by \categories in $ \Kcal $ is an equality.
	Hence by the universal property of $ \PSh_{\Kcal}(\Ccal_0) $, restriction along the inclusion $ y \colon \Ccal_0 \inclusion \PSh_{\Kcal}(\Ccal_0) $ defines an equivalence
	\begin{equation*}
		\Fun(\PSh_{\Kcal}(\Ccal_0),X) \equivalence \Fun(\Ccal_0,X) \period
	\end{equation*}
	By adjunction we have a commutative square
	\begin{equation*}
		\begin{tikzcd}[row sep=2.5em, column sep=3em]
			\Fun(\PSh_{\Kcal}(\Ccal_0),X) \arrow[r, "-\of y", "\sim"'{yshift=0.25ex}] \arrow[d, "\wr"'{xshift=0.25ex}] & \Fun(\Ccal_0,X) \arrow[d, "\wr"{xshift=-0.25ex}] \\
			\Fun(\Bup(\PSh_{\Kcal}(\Ccal_0)),X) \arrow[r, "-\of \Bup y"'] & \Fun(\Bup\Ccal_0,X) \period
		\end{tikzcd}
	\end{equation*}
	Since all other functors are equivalences, the bottom horizontal functor is an equivalence. 
	Thus $ \Bup y \colon \Bup \Ccal_0 \to \Bup(\PSh_{\Kcal}(\Ccal_0)) $ is an equivalence, as desired.
\end{proof}

\begin{corollary}\label{cor:classifying_anima_of_an_accessible_category_is_small}
	If $ \Ccal $ is an accessible \category, then the classifying anima $ \Bup \Ccal $ is small.
\end{corollary}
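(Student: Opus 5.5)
We reduce the claim to \Cref{cor:classifying_anima_of_free_cocompletions}. An accessible \category $ \Ccal $ is $ \kappa $-accessible for some regular cardinal $ \kappa $. So there is a small \category $ \Ccal_0 $, namely the full subcategory of $ \kappa $-compact objects $ \Ccal^{\kappa} $ (essentially small, and replaced by a small skeleton if needed), together with an equivalence $ \Ccal \equivalent \Ind_{\kappa}(\Ccal_0) $ \HTT{Definition}{5.4.2.1}.

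We then identify $ \Ind_{\kappa}(\Ccal_0) $ with $ \PSh_{\Kcal}(\Ccal_0) $, where $ \Kcal $ is the collection of small $ \kappa $-filtered \categories. Indeed, $ \Ind_{\kappa}(\Ccal_0) $ is by construction the smallest full subcategory of $ \PSh(\Ccal_0) $ containing the representables and closed under $ \kappa $-filtered colimits. It also has the universal property of the free cocompletion under $ \kappa $-filtered colimits \HTT{Proposition}{5.3.5.10}. This matches the explicit description of $ \PSh_{\Kcal}(\Ccal_0) $ given above.

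The one input that needs checking is that every $ \kappa $-filtered \category is weakly contractible. This is standard: filtered \categories are weakly contractible \HTT{Lemma}{5.3.1.20}, and $ \kappa $-filtered implies filtered. Alternatively, the colimit functor $ \Fun(\Ical,\An) \to \An $ commutes with finite limits for filtered $ \Ical $, hence preserves the terminal object, so $ \Bup\Ical \equivalent \colim_{\Ical} \pt \equivalent \pt $.

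Applying \Cref{cor:classifying_anima_of_free_cocompletions} to $ \Ccal_0 $ and this $ \Kcal $ gives $ \Bup\Ccal \equivalent \Bup(\PSh_{\Kcal}(\Ccal_0)) \equivalent \Bup\Ccal_0 $, which is small because $ \Ccal_0 $ is small. None of these steps is a real obstacle; the only care needed is the size bookkeeping. Specifically, $ \Bup $ here is the large version $ \CATinfty \to \AN $, and smallness of the result means it is equivalent to an object of $ \An $, which is exactly what \Cref{cor:classifying_anima_of_free_cocompletions} provides.
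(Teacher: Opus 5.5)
Your proof is correct and follows the paper's argument exactly: present $\Ccal$ as $\Ind_\kappa(\Ccal_0)$ for a small $\Ccal_0$, note that $\kappa$-filtered $\infty$-categories are weakly contractible, and apply \Cref{cor:classifying_anima_of_free_cocompletions} with $\Kcal$ the collection of small $\kappa$-filtered $\infty$-categories. Your extra remarks spell out what the paper leaves implicit: the identification of $\Ind_\kappa(\Ccal_0)$ with $\PSh_{\Kcal}(\Ccal_0)$ via the universal property, and the alternative proof of weak contractibility via left exactness of filtered colimits in anima.
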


\begin{proof}
	Since $ \Ccal $ is accessible, there exists a regular cardinal $ \kappa $ and small \category $ \Ccal_0 $ such that $ \Ccal $ is equivalent to the free cocompletion $ \Ind_\kappa(\Ccal_0) $ of $ \Ccal_0 $ under $ \kappa $-filtered colimits.
	Since $ \kappa $-filtered \categories are weakly contractible \cite[Tags \kerodonlink{02PJ} \& \kerodonlink{02QL}]{Kerodon}, the claim follows from \Cref{cor:classifying_anima_of_free_cocompletions}. 
\end{proof}

\begin{observation}[{(classifying anima of accessible \categories)}]
	Note that every small anima is idempotent complete, and by \Cref{lem:anima_admit_weakly_contractible_(co)limits}, every functor between small anima is accessible.
	Hence we have an inclusion $ \An \subset \Acc $.
	Since the classifying anima of an accessible \category is small (\Cref{cor:classifying_anima_of_an_accessible_category_is_small}), we deduce that the classifying anima functor $ \Bup \colon \CATinfty \to \AN $ restricts to a left adjoint 
	\begin{equation*}
		\Bup \colon \Acc \to \An
	\end{equation*}
	to the inclusion.
	Moreover, since $ \Acc \subset \CATinfty $ is closed under limits, $ \Bup \colon \Acc \to \An $ preserves finite products.
	Hence pointwise application of $ \Bup $ defines a left adjoint to the inclusion
	\begin{equation*}
		\Cond(\An) \inclusion \Cond(\Acc) \period
	\end{equation*}
	We also denote this left adjoint by $ \Bup \colon \Cond(\Acc) \to \Cond(\An) $.
	Given a condensed accessible \category $ \Ccal $, we refer to $ \Bup \Ccal $ as the \defn{condensed classifying anima} of $ \Ccal $.
\end{observation}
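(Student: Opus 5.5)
The statement to prove is the final ``Observation'': $\Bup$ restricts to a left adjoint $\Acc \to \An$ to the inclusion $\An \subset \Acc$, this restriction preserves finite products, and so pointwise application gives a left adjoint $\Cond(\Acc) \to \Cond(\An)$ to the inclusion $\Cond(\An) \inclusion \Cond(\Acc)$. The plan has four steps.

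\textbf{Step 1: the inclusion $\An \subset \Acc$.} A small anima $X$ is idempotent complete, since idempotents in a groupoid split trivially. Hence $X$ is accessible, by the characterization of small accessible \categories as small idempotent complete ones \kerodon{06KS}. Any functor $\Ccal \to X$ out of an accessible \category is accessible, because by \Cref{lem:anima_admit_weakly_contractible_(co)limits}(\ref{lem:anima_admit_weakly_contractible_(co)limits.2}) it preserves every weakly contractible colimit that exists, in particular all $\kappa$-filtered ones. So $\An$ is a full subcategory of $\Acc$, and every functor from an accessible \category into a small anima is a morphism of $\Acc$.

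\textbf{Step 2: the adjunction $\Acc \rightleftarrows \An$.} For $\Ccal \in \Acc$, the anima $\Bup\Ccal$ is small by \Cref{cor:classifying_anima_of_an_accessible_category_is_small}, so $\Bup\Ccal \in \An$. The unit $\Ccal \to \Bup\Ccal$ lies in $\Acc$ by Step 1. For small $X$, the universal property of $\Bup$ in $\CATinfty$ gives
\begin{equation*}
	\Map_{\Acc}(\Ccal,X) = \Map_{\CATinfty}(\Ccal,X) \equivalent \Map_{\AN}(\Bup\Ccal,X) = \Map_{\An}(\Bup\Ccal,X) \comma
\end{equation*}
where the first equality holds because, by Step 1, every functor $\Ccal \to X$ is accessible. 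This exhibits $\Bup \colon \Acc \to \An$ as left adjoint to the inclusion.

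\textbf{Step 3: preservation of finite products.} By \Cref{rec:Acc_admits_limits}, products in $\Acc$ are computed in $\CATinfty$. The functor $\Bup \colon \CATinfty \to \AN$ preserves finite products, because $\Bup$ is the localization at all morphisms and the localization of $\Ccal \times \Dcal$ at all morphisms is the product of the localizations. The empty product is also fine, since $\Bup(\pt) = \pt$. Therefore $\Bup \colon \Acc \to \An$ preserves finite products.

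\textbf{Step 4: the condensed adjunction.} Because $\Bup$ preserves finite products, postcomposition with $\Bup$ carries finite-product-preserving presheaves $\Extr^{\op} \to \Acc$ to finite-product-preserving presheaves $\Extr^{\op} \to \An$. Postcomposing with an adjunction gives an adjunction on functor categories, $\Fun(\Extr^{\op},\Acc) \rightleftarrows \Fun(\Extr^{\op},\An)$. Both functors preserve the full subcategories of product-preserving presheaves: the inclusion because it is a right adjoint and so preserves products, and $\Bup$ by Step 3. The adjunction therefore restricts to the full subcategories $\Cond(\Acc)$ and $\Cond(\An)$.

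The only step with real content is the size issue in Step 2, which is handled by \Cref{cor:classifying_anima_of_an_accessible_category_is_small}. The remaining steps are routine.
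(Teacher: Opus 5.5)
Your proposal is correct and follows the paper's argument. It uses the same ingredients in the same order: small anima are idempotent complete and hence lie in $\Acc$; \Cref{lem:anima_admit_weakly_contractible_(co)limits} gives accessibility of functors into anima; \Cref{cor:classifying_anima_of_an_accessible_category_is_small} settles size; and closure of $\Acc$ under limits in $\CATinfty$ gives finite-product preservation and hence the pointwise condensed adjunction. You are slightly more careful than the paper in two places: you state that every functor from an accessible $\infty$-category into a small anima is accessible, which the identification $\Map_{\Acc}(\Ccal,X) = \Map_{\CATinfty}(\Ccal,X)$ actually needs, and you justify why $\Bup$ preserves finite products.
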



\subsection{The case of spectral \texorpdfstring{$\infty$}{∞}-topoi}\label{subsec:the_case_of_spectral_topoi}

We're now ready to prove the main result of this note.
The result applies to \textit{spectral \topoi} introduced in our work with Barwick and Glasman \cite{arXiv:1807.03281}.
The unfamiliar reader can consult \cref{subsec:spectral_topoi} for a quick review.

\begin{theorem}\label{thm:extra_adjoint_for_spectral_topoi}
	Let $ \Xcal $ be a spectral \topos.
	Then:
	\begin{enumerate}
		\item\label{thm:extra_adjoint_for_spectral_topoi.1} For each extremally disconnected profinite set $ K $, the inclusion \smash{$ \Ptcohbf(\Xcal)(K) \inclusion \Ptbf(\Xcal)(K) $} admits a left adjoint.

		\item\label{thm:extra_adjoint_for_spectral_topoi.2} The inclusion $ \Ptcohbf(\Xcal) \inclusion \Ptbf(\Xcal) $ induces an equivalence on condensed classifying anima.
	\end{enumerate}
\end{theorem}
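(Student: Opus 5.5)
We plan to deduce \eqref{thm:extra_adjoint_for_spectral_topoi.2} from \eqref{thm:extra_adjoint_for_spectral_topoi.1}, so most of the work goes into \eqref{thm:extra_adjoint_for_spectral_topoi.1}. Granting \eqref{thm:extra_adjoint_for_spectral_topoi.1}, the inclusion $ \Ptcohbf(\Xcal)(K) \inclusion \Ptbf(\Xcal)(K) $ has a left adjoint for every $ K \in \Extr $. The unit and counit of an adjunction are natural transformations, i.e., functors out of $ \Ccal \times [1] $, and $ \Bup $ inverts $ [1] $. So every functor admitting an adjoint is a weak homotopy equivalence. Since $ \Bup \colon \Cond(\Acc) \to \Cond(\An) $ is computed pointwise in $ K $, the map $ \Bup\Ptcohbf(\Xcal) \to \Bup\Ptbf(\Xcal) $ is then an equivalence of condensed anima. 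We do not need the left adjoints to be natural in $ K $.

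For \eqref{thm:extra_adjoint_for_spectral_topoi.1}, we first reduce to pretopoi. By \Cref{obs:Ptbf_only_depends_on_bounded_reflection}, \Cref{obs:Ptcohbf_only_depends_on_bounded_reflection} and \Cref{prop:Funupperstar_to_a_bounded_Postnikov_complete_topos}, writing $ \Ccal = \Xcal\cohtrun $, the inclusion becomes
\begin{equation*}
	\Funpre(\Ccal,\Sh(K)^{\coh}_{<\infty}) \inclusion \Funpre(\Ccal,\Sh(K)) \period
\end{equation*}
This is the direct analogue of \eqref{eq:inclusion_of_lattice_maps}. We would imitate the spectral-space argument from the introduction: construct a ``closure'' operation sending a pretopos morphism $ F \colon \Ccal \to \Sh(K) $ to a coherent one $ \overline{F} $, together with a map $ F \to \overline{F} $, and check the universal property.

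To do this we use the structure theory of spectral \topoi from \cref{subsec:spectral_topoi}. We write $ \Xcal $ as a cofiltered limit of presheaf \topoi $ \Fun(\Pi_\alpha,\An) $ with $ \Pi_\alpha $ finite layered \categories, with coherent transition maps, so that $ \Ccal $ is the filtered colimit of the corresponding pretopoi. An algebraic morphism $ \Fun(\Pi,\An) \to \Sh(K) $ is a $ K $-family of points, i.e., a sheaf on $ K $ of ind-objects of $ \Pi^{\op} $. The coherent ones are those which are locally constant with values in $ \Pi^{\op} $ itself.

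For a single finite layered $ \Pi $, we would argue as follows. Since $ \Pi $ has finitely many objects up to equivalence and every endomorphism is invertible, every filtered diagram in $ \Pi^{\op} $ is eventually constant, so $ \Pi^{\op} \inclusion \Ind(\Pi^{\op}) $ has a left adjoint ``take the eventual value''. We then globalize over $ K $. Over each point the reflection is attained at some stage of the ind-system, and the locus where a given stage is attained is open in $ K $. Because $ K $ is extremally disconnected, closures of these opens are clopen, exactly as in the Sierpiński and spectral-space cases. This lets us glue the pointwise reflections into a locally constant $ \Pi^{\op} $-valued sheaf on $ K $ and verify the universal property stalkwise and on clopens, using the identity $ \overline{U \intersect V} = \overline{U} \intersect \overline{V} $ to handle finite limits.

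Finally, we pass to the limit over $ \alpha $. We must check that the left adjoints at each stage are compatible with restriction along the coherent transition maps, so that they assemble into a left adjoint on the limit. Equivalently, the reflection of $ F $ restricted to $ \Ccal_\alpha $ should be the reflection of $ F|_{\Ccal_\alpha} $.

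We expect the gluing over $ K $ and this compatibility to be the main obstacle. In particular, showing that the glued functor $ \overline{F} $ still preserves effective epimorphisms and finite coproducts is harder than the lattice case, because the objects are \categories rather than posets. Our first attempt would be to check these properties stalkwise at ultrafilters. Writing $ K $ as a retract of $ \upbeta(K^{\delta}) $, where $ K^{\delta} $ is $ K $ with the discrete topology, we would use \Cref{cor:restriction_from_PtXbetaS_to_PtXS_admits_a_right_adjoint} to compare $ \Ptbf(\Xcal)(K) $ with the family of points $ \Pt(\Xcal)^{K} $.
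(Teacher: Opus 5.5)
Your deduction of (2) from (1) is correct and is exactly what the paper does. The gap is in (1): you never actually construct the left adjoint, and the sketch you give for a single finite layered $\Pi$ cannot work as written.

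You yourself observe that every filtered diagram in $\Pi^{\op}$ is eventually constant. That makes $\Pi^{\op} \hookrightarrow \Ind(\Pi^{\op})$ an \emph{equivalence}, not merely a reflection; this is the statement that every point of a spectral $\infty$-topos is coherent. So at each point of $K$ there is nothing to reflect, and gluing the pointwise reflections simply returns $F$.

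Non-coherence is not a stalkwise phenomenon. Take the Sierpiński case with $U \subset K$ open but not closed. The reflection changes the stalk at every point of $\overline{U} \sminus U$, and at such a point $F$ is not locally constant on any neighbourhood, so no ``stage is attained'' there. You would therefore have to produce both the value of $\overline{F}$ at these boundary points and the comparison map $F \to \overline{F}$ there. For non-posetal $\Pi$, closure-of-opens bookkeeping does not determine these; the identity $\overline{U \cap V} = \overline{U} \cap \overline{V}$ only controls the posetal part. After that you would still owe preservation of finite limits, finite coproducts and effective epimorphisms, plus compatibility of the stagewise adjoints over $\alpha$. You list all of these as the ``main obstacle'' rather than proving them.

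The missing idea is that the two tools in your last sentence already finish the proof. No closure construction and no passage to a limit over $\alpha$ is needed.
\begin{enumerate}
\item By Gleason's theorem, $K$ is a retract of $\upbeta(S)$. So the inclusion at $K$ is a retract of the inclusion at $\upbeta(S)$. Since $\Ptcohbf(\Xcal)(K)$ is idempotent complete, a left adjoint at $\upbeta(S)$ yields one at $K$.
\item At $\upbeta(S)$, spectrality implies that $\jupperstar \circ -$ restricts to an equivalence $\Ptcohbf(\Xcal)(\upbeta(S)) \simeq \Pt(\Xcal)^S$ (\Cref{lem:value_of_Ptcohbf_on_betaS}). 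Concretely, a coherent $\upbeta(S)$-family is the unique extension of an $S$-family of points, because $S_{\uppi}^{\wedge} = \upbeta(S)$.
\item The sheaf-theoretic Łoś theorem says that $\jupperstar \circ - \colon \Ptbf(\Xcal)(\upbeta(S)) \to \Pt(\Xcal)^S$ has the fully faithful right adjoint $\jlowerstar \circ -$ (\Cref{cor:restriction_from_PtXbetaS_to_PtXS_admits_a_right_adjoint}).
\item Identifying the inclusion with this right adjoint, the left adjoint is ``restrict to $S$, then take the unique coherent extension.'' The map $F \to \overline{F}$ is the unit $F \to \jlowerstar \jupperstar F$.
\end{enumerate}
The boundary values you were trying to build by hand come out automatically as ultralimits of the stalks of $F$, and the comparison map comes for free from the adjunction.
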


\begin{proof}
	First note that since the classifying anima functor sends adjunctions to equivalences, \eqref{thm:extra_adjoint_for_spectral_topoi.2} is an immediate consequence of \eqref{thm:extra_adjoint_for_spectral_topoi.1}. 
	For \eqref{thm:extra_adjoint_for_spectral_topoi.1}, note that by Gleason's theorem \cites{MR0121775}[Chapter III, \S3.7]{MR861951} there exists a set $ S $ such that $ K $ is a retract of the Čech--Stone compactification $ \upbeta(S) $.
	Thus the inclusion \smash{$ \Ptcohbf(\Xcal)(K) \inclusion \Ptbf(\Xcal)(K) $} is a retract of the inclusion
	\begin{equation}\label{eq:inclusion_of_categories_of_points_for_betaS}
		\Ptcohbf(\Xcal)(\upbeta(S)) \inclusion \Ptbf(\Xcal)(\upbeta(S)) \period
	\end{equation}
	Since $ \Ptcohbf(\Xcal)(K) $ is idempotent complete, by \SAG{Lemma}{21.1.2.14} it suffices to show that the inclusion \eqref{eq:inclusion_of_categories_of_points_for_betaS} admits a left adjoint.
	That is, we're reduced to the case where $ K = \upbeta(S) $.

	In this case, we have a commutative triangle 
	\begin{equation*}
		\begin{tikzcd}[sep=4em]
			\Ptcohbf(\Xcal)(\upbeta(S)) = \Funupperstarcoh(\Xcal,\Sh(\upbeta(S))) \arrow[dr, "\sim"{sloped, yshift=-0.25em}] \arrow[d, hooked] & \\ 
			\Ptbf(\Xcal)(\upbeta(S)) = \Funupperstar(\Xcal,\Sh(\upbeta(S))) \arrow[r, "\jupperstar \of -"'] & \Funupperstar(\Xcal,\Sh(S)) \equivalent \Pt(\Xcal)^S \period
		\end{tikzcd}
	\end{equation*}
	Since $ \Xcal $ is spectral, the diagonal functor is an equivalence (see \Cref{lem:value_of_Ptcohbf_on_betaS}).
	Since $ \Xcal $ is coherent, by \Cref{cor:restriction_from_PtXbetaS_to_PtXS_admits_a_right_adjoint}, the bottom horizontal functor admits a fully faithful right adjoint $ \Pt(\Xcal)^S \inclusion \Ptbf(\Xcal)(\upbeta(S)) $.
	Thus, under the equivalence $ \Ptcohbf(\Xcal)(\upbeta(S)) \equivalence \Pt(\Xcal)^S $, the left-hand vertical inclusion is identified with the fully faithful right adjoint $ \Pt(\Xcal)^S \inclusion \Ptbf(\Xcal)(\upbeta(S)) $ to the horizontal functor.
	Hence the left-hand vertical functor also admits a left adjoint.
\end{proof}

\begin{warning}
	If $ K $ is a profinite set which is not extremally disconnected, then the inclusion
	\begin{equation*}
		\Funupperstarcoh(\Xcal,\Sh(K)) \inclusion \Funupperstar(\Xcal,\Sh(K))
	\end{equation*}
	generally does not admit a left adjoint.
	In particular, the inclusion $ \Ptcohbf(\Xcal) \inclusion \Ptbf(\Xcal) $ is not usually a right adjoint of condensed \categories in the sense of \cite[Definition 3.1.1 \& Proposition 3.2.9]{MR4752519}.
	To see this, let $ K =\NN^+ $ be the one-point compactification of the natural numbers and let $ \Xcal $ be the \topos of sheaves on the Sierpiński space.
	Then we're considering the inclusion
	\begin{equation}\label{eq:inclusion_of_clopens_of_NN+}
		\Clopen(\NN^+) \inclusion \Open(\NN^+)
	\end{equation}
	of clopen subsets of $ \NN^+ $ into all open subsets.
	
	To see that \eqref{eq:inclusion_of_clopens_of_NN+} does not admit a left adjoint, recall that a subset $ U \subset \NN^+ $ is clopen if and only if $ U $ is either finite and doesn't contain $ \infty $ or $ U $ is cofinite and does contain $ \infty $.
	Consider the subspace
	\begin{equation*}
		Y = 2\NN \union \{\infty\}
	\end{equation*}
	consisting of even natural numbers and $ \infty $; then $ Y $ is not clopen.
	If the inclusion \eqref{eq:inclusion_of_clopens_of_NN+} admitted a left adjoint $ L $, then there would be a smallest clopen subset $ L(Y) $ containing $ Y $.
	To see that no such clopen exists, note that for each odd natural number $ n $, the clopen $ U_n \colonequals \NN^+ \sminus \{n\} $ contains $ Y $.
	Hence
	\begin{equation*}
		L(Y) \subset \Intersection_{n \textup{ odd}} U_n = Y \period
	\end{equation*}
	Since $ Y \subset L(Y) $, this says that $ L(Y) = Y $; but this is impossible since $ Y $ is not clopen.
\end{warning}


\newpage

\appendix

\section{Complements on \texorpdfstring{$\infty$}{∞}-topoi}\label{app:complements_on_topoi}

In the main body of this note, we needed a few results from the theory of coherent \topoi that are not hard, but not easily citeable from the literature.
Since some parts of the theory get quite technical, for the convenience of the reader, we recall the necessities in this appendix.
In \cref{subsec:bounded_and_Postnikov_complete_topoi}, we recall background on bounded and Postnikov complete \topoi.
This is mostly not needed in this note, but we prove some technical lemmas that allow us to state a few results more cleanly in the main body.
In \cref{subsec:coherent_topoi_and_pretopoi}, we recall the basics of coherent \topoi and the classification of bounded coherent \topoi in terms of \pretopoi.
Finally, \Cref{subsec:spectral_topoi} recalls the theory of spectral \topoi introduced in our work with Barwick and Glasman \cite{arXiv:1807.03281}.


\subsection{Bounded and Postnikov complete \texorpdfstring{$\infty$}{∞}-topoi}\label{subsec:bounded_and_Postnikov_complete_topoi}

Now we recall two technical conditions on \topoi that make an appearance in our proofs: \textit{boundedness} and \textit{Postnikov completeness}.
They both guarantee that the \topos is controlled by its full subcategory of truncated objects, and are particular to the theory of \topoi (meaning there is no analogue in $ 1 $-topos theory).
The reader should refer to \cite[\SAGsec{A.7}]{SAG} for full details, or to \cite[Chapter 3]{arXiv:1807.03281} for a more detailed overview than the one provided here.

For boundedness, we first recall a bit about \textit{$ n $-localic \topoi}.
The idea is that $ n $-localic \topoi are the \topoi that are determined by their underlying $ n $-topoi of $ (n-1) $-truncated objects.

\begin{notation}
	Given \atopos $ \Xcal $ and integer $ n \geq -2 $, we write $ \Xcal_{\leq n} \subset \Xcal $ for the full subcategory spanned by the $ n $-truncated objects.
	The inclusion $ \Xcal_{\leq n} \inclusion \Xcal $ admits a left adjoint $ \trun_{\leq n} \colon \Xcal \to \Xcal_{\leq n} $ called \defn{$ n $-truncation}.
	We write $ \Xcal_{<\infty} \colonequals \Union_{n \geq -2} \Xcal_{\leq n} $ for the full subcategory spanned by the truncated objects.
\end{notation}

\begin{definition}[{\cite[\HTTsubsec{6.4.5}]{HTT}}]\label{def:n-localic_topoi}
	Let $ n \geq 0 $ be an integer.
	We say that \atopos $ \Xcal $ is \defn{$ n $-localic} if for every \topos $ \Ycal $, the natural functor
	\begin{equation*}
		\Funlowerstar(\Ycal,\Xcal) \to \Funlowerstar(\Ycal_{\leq n-1},\Xcal_{\leq n-1})
	\end{equation*}
	is an equivalence of \categories.
	The inclusion of the full subcategory of $ \RTop $ spanned by the $ n $-localic \topoi admits a left adjoint $ \Lup_n $.
	We call $ \Lup_n(\Xcal) $ the \defn{$ n $-localic reflection} of $ \Xcal $.
\end{definition}

\begin{nul}\label{nul:n-localic_in_terms_of_sites}
	The proof of \HTT{Proposition}{6.4.5.9} demonstrates that \atopos $ \Xcal $ is $n$-localic if and only if there exists a small $n$-site \textit{with all finite limits} $(\Ccal,\tau)$ and an equivalence $ \Xcal \equivalent \Sh_{\tau}(\Ccal)$.
\end{nul}

\begin{recollection}[{(boundedness)}]\label{rec:bounded_topoi}
	Let $ \Xcal $ be \atopos.
	The \defn{bounded reflection} of $ \Xcal $ is the cofiltered limit
	\begin{equation*}
		\Xcal^{\b} \colonequals \lim_{n \in \NN^{\op}} \Lup_n(\Xcal)
	\end{equation*}
	formed in $ \RTop $.
	We say that $ \Xcal $ is \defn{bounded} if the natural geometric morphism $ \blowerstar \colon \Xcal \to \Xcal^{\b} $ is an equivalence.
	The assignment $ \Xcal \mapsto \Xcal^{\b} $ is left adjoint to the inclusion of the full subcategory of $ \RTop $ spanned by the bounded \topoi \SAG{Proposition}{A.7.1.5}.
\end{recollection}

The second condition is defined in terms of the \categories $ \Xcal_{\leq n} $ of $ n $-truncated objects directly.
Note that these subcategories are not themselves \topoi.

\begin{recollection}[(Postnikov completeness)]\label{rec:Postnikov_completeness}
	Let $ \Xcal $ be \atopos.
	The \defn{Postnikov completion} of $ \Xcal $ is the limit 
	\begin{equation*}
		\Xcal^{\post} \colonequals  \lim \bigg(
		\begin{tikzcd}[sep=1.5em]
			\cdots \arrow[r] & \Xcal_{\leq n+1} \arrow[r, "\trun_{\leq n}"] & \Xcal_{\leq n} \arrow[r] & \cdots \arrow[r, "\trun_{\leq 0}"] & \Xcal_{\leq 0} 
		\end{tikzcd}\bigg)
		\index[notation]{Cpost@$C^{\post}$}
	\end{equation*}
	formed in \categories.
	We write $ \tupperstar \colon \Xcal \to \Xcal^{\post} $ for the natural comparison functor.
	We say that the \topos $ \Xcal $ is \defn{Postnikov complete} if $ \tupperstar \colon \Xcal \to \Xcal^{\post} $ is an equivalence.
	
	The Postnikov completion $ \Xcal^{\post} $ is also \atopos; moreover, $ \tupperstar \colon \Xcal \to \Xcal^{\post} $ is a left exact left adjoint \SAG{Theorem}{A.7.2.4}.
	The assignment $ \Xcal \mapsto \Xcal^{\post} $ is right adjoint to the inclusion of the full subcategory of $ \RTop $ spanned by the Postnikov complete \topoi 
	\SAG{Corollary}{A.7.2.6}.
	Moreover, the functors $ \Xcal \mapsto \Xcal^{\post} $ and $ \Xcal \mapsto \Xcal^{\b} $ define inverse equivalences between the full subcategories of $ \RTop $ spanned by the bounded and Postnikov complete \topoi \SAG{Corollary}{A.7.2.6}.
	
	In summary, if we write $ \RTop^{\b} $ and $ \RTop^{\post} $ for the full subcategories of $ \RTop $ spanned by the bounded and Postnikov complete coherent \topoi, respectively, we have the following diagram
	\begin{equation*}
		\begin{tikzcd}[column sep=4em, row sep=3em]
			\RTop^{\post} \arrow[dr, hooked, shift right] \arrow[rr, "\sim"'{yshift=0.25ex}, "{(-)^{\b}}", shift left] & & \RTop^{\b} \arrow[dl, hooked', shift left] \arrow[ll, shift left, "{(-)^{\post}}"{xshift=0.8ex}] \\
			& \RTop \arrow[ur, "{(-)^{\b}}"{xshift=1ex, yshift=-0.5ex}, shift left] \arrow[ul, "{(-)^{\post}}"'{xshift=-0.5ex, yshift=-0.5ex}, shift right] & \phantom{\RTop^{\b}} \period
		\end{tikzcd}
	\end{equation*}
	In particular, for any \topos $ \Xcal $, there are natural identifications
	\begin{equation*}
		\Xcal^{\b} = (\Xcal^{\post})^{\b} \andeq \Xcal^{\post} = (\Xcal^{\b})^{\post} \period
	\end{equation*}
\end{recollection}

\begin{observation}\label{obs:Xpost_Xbdd_and_X_have_the_same_truncated_objects}
	Let $ \Xcal $ be \atopos.
	Then the natural geometric morphisms
	\begin{equation*}
		\Xcal^{\post} \to \Xcal \to \Xcal^{\b}
	\end{equation*}
	restrict to equivalences 
	\begin{equation*}
		(\Xcal^{\post})_{<\infty} \equivalence \Xcal_{<\infty} \equivalence (\Xcal^{\b})_{<\infty}
	\end{equation*}
	on truncated objects.
	See \cite[\SAGthm{Lemma}{A.7.1.4} \& \SAGthm{Proposition}{A.7.3.7}]{SAG}.
\end{observation}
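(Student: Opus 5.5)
The plan is to handle the Postnikov completion directly from its definition as a limit, and then deduce the statement for the bounded reflection by two-out-of-three, using the identification $ (\Xcal^{\b})^{\post} \equivalent \Xcal^{\post} $ from \Cref{rec:Postnikov_completeness}. Throughout, I describe the geometric morphisms by their left exact left adjoints. Those adjoints preserve $ n $-truncated objects, so they restrict to functors on truncated objects, and it suffices to prove equivalences on $ \Xcal_{\leq n} $ for each fixed $ n $.

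\emph{Step 1: the Postnikov completion.} The geometric morphism $ \Xcal^{\post} \to \Xcal $ has left adjoint the comparison functor $ \tupperstar \colon \Xcal \to \Xcal^{\post} $. This functor sends $ Y $ to the compatible family $ (\trun_{\leq m} Y)_{m} $. An object of $ \Xcal^{\post} $ is a compatible family $ (Y_m)_m $ with $ Y_m \in \Xcal_{\leq m} $ and $ \trun_{\leq m} Y_{m+1} \equivalent Y_m $. Truncations in $ \Xcal^{\post} $ are computed levelwise; this uses that $ \Xcal^{\post} $ is an \topos, so the claim needs a short check, or a citation of \SAG{Theorem}{A.7.2.4}.

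Granting this, such a family is $ n $-truncated if and only if the transition maps $ Y_{m+1} \to Y_m $ are equivalences for all $ m \geq n $. In that case the family is determined by $ Y_n $. Hence projection to the $ n $-th term gives an equivalence $ (\Xcal^{\post})_{\leq n} \equivalence \Xcal_{\leq n} $. Its composite with $ \tupperstar|_{\Xcal_{\leq n}} $ is $ \trun_{\leq n}|_{\Xcal_{\leq n}} \equivalent \id $. So $ \tupperstar $ restricts to an equivalence $ \Xcal_{\leq n} \equivalence (\Xcal^{\post})_{\leq n} $, and its right adjoint, the pushforward, restricts to the inverse equivalence.

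\emph{Step 2: the bounded reflection.} Apply Step 1 to the \topos $ \Xcal^{\b} $. This shows that $ (\Xcal^{\b})^{\post} \to \Xcal^{\b} $ is an equivalence on truncated objects. By \Cref{rec:Postnikov_completeness}, the natural geometric morphism $ \Xcal^{\post} \to (\Xcal^{\b})^{\post} $, induced by functoriality of $ (-)^{\post} $ applied to $ \Xcal \to \Xcal^{\b} $, is an equivalence. Naturality of the comparison maps $ (-)^{\post} \to (-) $ gives a commutative square. Composing around it, the composite $ \Xcal^{\post} \to \Xcal \to \Xcal^{\b} $ induces an equivalence on truncated objects. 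Since $ \Xcal^{\post} \to \Xcal $ is already an equivalence on truncated objects by Step 1, two-out-of-three shows that $ \Xcal \to \Xcal^{\b} $ is one too.

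\emph{Main obstacle.} The delicate points are bookkeeping rather than substance. The first is checking that truncation in the limit \topos $ \Xcal^{\post} $ is computed levelwise. The second is checking that the naturality square relating the two Postnikov comparison maps genuinely commutes. An alternative for Step 2, avoiding $ (-)^{\post} $, is to use that $ \Lup_n(\Xcal) $ and $ \Xcal $ agree on $ (n-1) $-truncated objects (\cite[\HTTsubsec{6.4.5}]{HTT}). One would then pass to the cofiltered limit defining $ \Xcal^{\b} $, but that requires knowing how truncated objects behave in limits of \topoi, which seems harder. So I would pursue the argument above, citing \SAG{Lemma}{A.7.1.4} for the bounded case if needed.
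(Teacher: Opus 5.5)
\textbf{Step 1.} Your Step 1 is correct, and the check you flag does not need that $\Xcal^{\post}$ is \atopos. Mapping anima in $\Xcal^{\post}$ are $\lim_m \Map(Y_m,Z_m)$, and this is $\Map(Y_n,Z_n)$ when $(Z_m)$ is constant from $n$ on. Conversely, suppose $(Z_m)$ is $n$-truncated in $\Xcal^{\post}$. Mapping $\tupperstar(Y)$ into it, for $k$-truncated $Y$, gives $\Map(Y,Z_k)$, so each $Z_k$ is $n$-truncated.

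\textbf{The gap is in Step 2.} Relative to \Cref{rec:Postnikov_completeness} as stated, your deduction is valid. But it does not prove the bounded half.

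The identification $\Xcal^{\post}\equivalent(\Xcal^{\b})^{\post}$ is not a formal consequence of the adjunctions there: $(-)^{\b}$ is a left adjoint and $(-)^{\post}$ a right adjoint, to different inclusions. Given your Step 1, it is in fact \emph{equivalent} to the claim that $\Xcal\to\Xcal^{\b}$ is an equivalence on truncated objects. To see this, note that $(\Xcal^{\b})^{\post}=\lim_n(\Xcal^{\b})_{\leq n}$. The pullback functor of the functoriality map $\Xcal^{\post}\to(\Xcal^{\b})^{\post}$ is the limit of the restrictions $(\Xcal^{\b})_{\leq n}\to\Xcal_{\leq n}$. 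By Step 1, its restriction to $k$-truncated objects is the $k$-th of these restrictions.

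The natural proof of that identification is $\lim_n(\Xcal^{\b})_{\leq n}\equivalent\lim_n\Xcal_{\leq n}$, which is exactly where \SAG{Lemma}{A.7.1.4} enters. So Step 2 reduces the bounded half to itself, and the content the paper imports by citation is missing from your argument.

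\textbf{A direct fix.} The route you set aside is no harder than Step 1.
\begin{enumerate}
\item Cofiltered limits in $\RTop$ are computed in $\CATinfty$ along pushforward functors (\HTT{Theorem}{6.3.3.1}). So mapping anima in $\Xcal^{\b}=\lim_n\Lup_n(\Xcal)$ are limits of mapping anima.
\item The projections are pushforwards, so they preserve limits and hence $k$-truncated objects. Together with the first point, this gives $(\Xcal^{\b})_{\leq k}\equivalent\lim_n\Lup_n(\Xcal)_{\leq k}$.
\item The pushforward along $\Xcal\to\Lup_n(\Xcal)$ restricts to an equivalence on $(n-1)$-truncated objects \cite[\HTTsubsec{6.4.5}]{HTT}. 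By two-out-of-three, the transition functors $\Lup_{n+1}(\Xcal)_{\leq k}\to\Lup_n(\Xcal)_{\leq k}$ are equivalences for $n>k$.
\item Hence the projection $(\Xcal^{\b})_{\leq k}\to\Lup_n(\Xcal)_{\leq k}$ is an equivalence for such $n$.
\item Since the composite $\Xcal\to\Xcal^{\b}\to\Lup_n(\Xcal)$ is the unit, two-out-of-three shows that $\Xcal_{\leq k}\to(\Xcal^{\b})_{\leq k}$ is an equivalence.
\end{enumerate}
With this replacing Step 2, your argument is complete and no longer depends on the Recollection.
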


\begin{example}
	For any small \category $ \Ccal $, the \topos $ \PSh(\Ccal) $ of presheaves on $ \Ccal $ is Postnikov complete.
\end{example}

\begin{example}
	If $ X $ is a paracompact topological space of finite covering dimension or a spectral space of finite Krull dimension, then $ \Sh(X) $ is Postnikov complete \cites[\HTTthm{Corollary}{7.2.1.12}, \HTTthm{Theorem}{7.2.3.6} \& \HTTthm{Remark}{7.2.4.18}]{HTT}[Theorem 3.12]{MR4296353}.
	In particular, the \topos of sheaves on a profinite set is Postnikov complete.
\end{example}

\begin{example}[{\cite{MO:168526}}]
	Let $ X $ be a topological space.
	If $ X $ admits a CW structure, then the \topos $ \Sh(X) $ is Postnikov complete.
\end{example}


\subsection{Coherent \texorpdfstring{$\infty$}{∞}-topoi and \texorpdfstring{$\infty$}{∞}-pretopoi}\label{subsec:coherent_topoi_and_pretopoi}

We now recall the basics of coherent \topoi and the classification of bounded coherent \topoi in terms of \pretopoi.
The reader should refer to \cite[\SAGapp{A}]{SAG} for full details, or to \cite[Chapter 3]{arXiv:1807.03281} for a more detailed overview than the one provided here.

\begin{definition}[(coherence)]\label{rec:coherence}
	Let $ \Xcal $ be \atopos.
	We say that $\Xcal $ is \defn{$ 0 $-coherent} (or \defn{quasicompact}) if for every effective epimorphism $ e \colon \coprod_{i \in I} U_i \surjection 1_{\Xcal} $, there exists a finite subset $ I_0 \subset I $ such that the restriction $ \coprod_{i \in I_0} U_i \surjection 1_{\Xcal} $ of $ e $ is still an effective epimorphism.
	Let $ n \geq 0 $, and define $n$-coherence of \topoi and their objects recursively as follows.
	\begin{enumerate}
		\item An object $U\in \Xcal $ is \defn{$n$-coherent} if the \topos $\Xcal_{/U}$ is $n$-coherent.

		\item The \topos $\Xcal $ is \defn{locally $n$-coherent} if every object $U\in \Xcal $ admits a cover $\{\fromto{V_i}{U}\}_{i\in I}$ in which each $V_i$ is $n$-coherent.

		\item The \topos $\Xcal $ is \defn{$(n+1)$-coherent} if $ \Xcal $ is locally $n$-coherent, and the $n$-coherent objects of $\Xcal $ are closed under finite products.
	\end{enumerate}

	An \topos $\Xcal $ is \defn{coherent} if for each $ n \geq 0 $, the \topos $ \Xcal $ is $n$-coherent.
	An object $U$ of \atopos $\Xcal $ is \defn{coherent} if $\Xcal_{/U}$ is a coherent \topos.
	Finally, \atopos $\Xcal $ is \defn{locally coherent} if every object $U\in \Xcal $ admits a cover $\{\fromto{V_i}{U}\}_{i\in I}$ in which each $V_i$ is coherent.
\end{definition}

\begin{notation}
	Let $\Xcal $ be \atopos. 
	Write $ \Xcal^{\coh} \subset \Xcal $ for the full subcategory of $ \Xcal $ spanned by the coherent objects and $ \Xcal\cohtrun \subset \Xcal $ for the full subcategory of $ \Xcal $ spanned by the truncated coherent objects. 
\end{notation}

\begin{definition}[(coherent geometric morphism)]\label{def:coherent_geometric_morphism}
	A geometric morphism between coherent \topoi $ \flowerstar \colon \Ycal \to \Xcal $ is \defn{coherent} if, for each coherent object \smash{$ U \in \Xcal $}, the object $ \fupperstar(U) \in \Ycal $ is coherent.
	This is equivalent to the requirement that $ \fupperstar $ carries $ \Xcal\cohtrun $ to $ \Ycal\cohtrun $ \cite[Corollary 3.4.5]{arXiv:1807.03281}.
\end{definition}

\begin{recollection}[(coherence is detected on Postnikov completions and bounded reflections)]\label{rec:coherence_is_detected_on_Postnikov_completions_and_bounded_reflections}
	Given \atopos $ \Xcal $, the natural geometric morphisms $ \Xcal^{\post} \to \Xcal \to \Xcal^{\b} $ restrict to equivalences 
	\begin{equation*}
		(\Xcal^{\post})\cohtrun \equivalence \Xcal\cohtrun \equivalence (\Xcal^{\b})\cohtrun \period
	\end{equation*}
	Moreover, $ \Xcal $ is coherent if and only if $ \Xcal^{\post} $ is coherent if and only if $ \Xcal^{\b} $ is coherent.
	See \cite[Lemma 3.4.12]{arXiv:1807.03281}.
	In addition, if $ \Xcal $ and $ \Ycal $ are coherent, then a geometric morphism $ \flowerstar \colon \Ycal \to \Xcal $ is coherent if and only if $ \flowerstar^{\post} \colon \Ycal^{\post} \to \Xcal^{\post} $ is coherent if and only if $ \flowerstar^{\b} \colon \Ycal^{\b} \to \Xcal^{\b} $ is coherent \cite[3.4.13]{arXiv:1807.03281}
\end{recollection}

Some examples are in order.
The main one comes from sheaves on a finitary \site.

\begin{definition}\label{def:finitary_infty-site}
	\Asite $(\Ccal,\tau)$ is \defn{finitary} if $ \Ccal $ admits fiber products, and, for every object $ U \in \Ccal $ and every covering sieve $ S \subset \Ccal_{/U} $, there is a finite subset $\{U_i\}_{i\in I} \subset S $ that generates a covering sieve.
\end{definition}

\begin{proposition}[\SAG{Proposition}{A.3.1.3}]\label{prop:SAG.A.3.1.3}
	Let $(\Ccal,\tau)$ be a finitary \site.
	Then:
	\begin{enumerate}
		\item The \topos $\Sh_{\tau}(\Ccal)$ locally coherent.

		\item For every object $ U \in \Ccal $, the image of $ U $ under the sheafified Yoneda embedding is a coherent object of $\Sh_{\tau}(\Ccal)$.
	
		\item If, in addition, $ \Ccal $ admits a terminal object, then $ \Sh_{\tau}(\Ccal) $ is coherent.
	\end{enumerate}
\end{proposition}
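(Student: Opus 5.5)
The plan is to prove, by induction on $ n \geq 0 $, the following statement $ (\ast)_n $: for every finitary \site $ (\Ccal,\tau) $ and every $ U \in \Ccal $, the sheafified representable $ h(U) \in \Sh_{\tau}(\Ccal) $ is $ n $-coherent, and $ \Sh_\tau(\Ccal) $ is locally $ n $-coherent; if $ \Ccal $ has a terminal object, then $ \Sh_\tau(\Ccal) $ is $ n $-coherent. The three claims follow at once, since every object of $ \Sh_\tau(\Ccal) $ is covered by sheafified representables.

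The first step is a reduction to the case with a terminal object. The slice $ \Ccal_{/U} $ carries an induced topology; it again admits fiber products, is finitary, and has terminal object $ \id_U $. Moreover $ \Sh_\tau(\Ccal)_{/h(U)} \equivalent \Sh(\Ccal_{/U}) $. So $ h(U) $ is $ n $-coherent if and only if $ \Sh(\Ccal_{/U}) $ is $ n $-coherent, and it suffices to prove $ n $-coherence of $ \Sh_\tau(\Ccal) $ for every finitary site with a terminal object $ \ast $.

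For $ n = 0 $, let $ \coprod_i V_i \surjection 1 $ be an effective epimorphism. Refine each $ V_i $ by sheafified representables $ h(W) \to V_i $. An effective epimorphism onto $ 1 = h(\ast) $ from a family of representables means exactly that the sieve on $ \ast $ generated by the maps $ W \to \ast $ is $ \tau $-covering. By finitarity, finitely many of these maps generate a covering sieve. These involve only finitely many indices $ i $, which gives quasicompactness.

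For the inductive step, assume $ (\ast)_n $ for all finitary sites. Local $ n $-coherence holds because every object is covered by representables, which are $ n $-coherent. It remains to see that the $ n $-coherent objects are closed under finite products, and this is the main obstacle. First, $ 1 = h(\ast) $ is $ n $-coherent. Next, for representables, $ h(U) \times h(V) = h(U \times V) $ because $ \Ccal $ has finite limits and sheafification is left exact, so it is $ n $-coherent by $ (\ast)_n $. For general $ n $-coherent objects $ X, Y $ I would use the standard characterization from \SAG{Appendix}{A.2}: in a locally $ (n-1) $-coherent topos, an object is $ n $-coherent if and only if it is quasicompact and its diagonal is relatively $ (n-1) $-coherent. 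Equivalently, the object admits a finite cover by $ n $-coherent objects whose pairwise fiber products over it are $ (n-1) $-coherent. Cover $ X $ and $ Y $ by finitely many representables, using quasicompactness from $ (\ast)_0 $ applied in the slices. Then $ X \times Y $ is finitely covered by the products $ h(U_i \times V_j) $. The fiber products of these over $ X \times Y $ are products of fiber products $ h(U_i)\times_X h(U_{i'}) $, which are $ (n-1) $-coherent by the hypothesis on $ X $. By induction these are closed under products, once one checks this in the slice topoi $ \Sh(\Ccal_{/U_i \times U_{i'}}) $. I expect the bookkeeping in this last step to require the most care, namely running the induction uniformly over all slice sites so that the $ (n-1) $-level closure under products is available there.
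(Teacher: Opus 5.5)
Your proposal is essentially correct. The paper gives no proof of this statement; it only cites Lurie's \emph{Spectral Algebraic Geometry}, Proposition A.3.1.3. Your argument follows the standard inductive proof of that result: induction on $n$, passing to slices $\Sh_{\tau}(\Ccal)_{/h(U)} \simeq \Sh(\Ccal_{/U})$ to obtain a terminal object, quasicompactness from finitarity, and closure of sheafified representables under finite products. Two points in your inductive step need adjusting.

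\textbf{The finite-cover criterion needs its own proof.} Your criterion is not a restatement of the fiber-product characterization, which says: $Z$ is quasicompact and $A \times_Z B$ is $(n-1)$-coherent for all $(n-1)$-coherent $A \to Z \leftarrow B$. The direction you use has to be proved by its own induction on $n$. It does hold.
\begin{itemize}
\item Let $\{U_i \to Z\}$ be a finite cover by $n$-coherent objects with each $U_i \times_Z U_j$ $(n-1)$-coherent, and let $A \to Z$ be $(n-1)$-coherent. Cover $A$ by finitely many $(n-1)$-coherent $W_k$ whose maps to $Z$ lift to some $U_{j(k)}$.
\item Then $W_k \times_Z U_i \simeq W_k \times_{U_{j(k)}} (U_{j(k)} \times_Z U_i)$ is $(n-1)$-coherent, because $U_{j(k)}$ is $n$-coherent.
\item The pairwise fiber products $(W_k \times_A W_l) \times_{W_k} (W_k \times_Z U_i)$ are $(n-2)$-coherent. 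The criterion one level down then shows $A \times_Z U_i$ is $(n-1)$-coherent; at level $0$ the pairwise condition is vacuous.
\item For an arbitrary $(n-1)$-coherent $B \to Z$, write $W_k \times_Z B \simeq W_k \times_{U_{j(k)}} (U_{j(k)} \times_Z B)$. By the previous step $U_{j(k)} \times_Z B$ is $(n-1)$-coherent, and the same argument gives the fiber-product condition for $Z$.
\end{itemize}
The passage from $(\ast)_0$ to $(\ast)_1$ needs none of this: $X \times Y$ is covered by finitely many quasicompact objects $h(U_i \times V_j)$.

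\textbf{The slice topoi are unnecessary.} The objects $(h(U_i) \times_X h(U_{i'})) \times (h(V_j) \times_Y h(V_{j'}))$ are products in $\Sh_{\tau}(\Ccal)$ itself. $(\ast)_n$ for $\Ccal$ already says $\Sh_{\tau}(\Ccal)$ is $n$-coherent, so its $(n-1)$-coherent objects are closed under finite products.

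You could also avoid arbitrary $X$ and $Y$ altogether. $\Sh_{\tau}(\Ccal)$ is $(n+1)$-coherent if and only if its terminal object is. A form of the fiber-product criterion whose test objects range over a generating family of $n$-coherent objects reduces this to $h(U) \times h(V) \simeq h(U \times V)$ being $n$-coherent. That form needs the same kind of local-to-global argument, but the bookkeeping is simpler.
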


\noindent Here are some more geometric examples that can be deduced from this.

\begin{example}
	If $ X $ is a sober topological space, then $ \Sh(X) $ is coherent if and only if $ X $ is spectral, i.e., additionally quasicompact, quasiseparated, and has a basis of quasicompact opens.
	In this case, $ \Sh(X)_{<\infty}^{\coh} $ is the full subcategory spanned by the constructible sheaves of anima on $ X $.
	If $ f \colon Y \to X $ is a map between spectral spaces, then the geometric morphism $ \flowerstar \colon \Sh(Y) \to \Sh(X) $ is coherent if and only if the map $ f $ is quasicompact.
\end{example}

\begin{example}
	A scheme $ X $ is quasicompact and quasiseparated if and only if its étale \topos $ X_{\et} $ is coherent.
	In this case, $ (X_{\et})_{<\infty}^{\coh} $ is the full subcategory spanned by the constructible étale sheaves of anima on $ X $.
	If $ f \colon Y \to X $ is any morphism between qcqs schemes, then the geometric morphism $ \flowerstar \colon Y_{\et} \to X_{\et} $ is coherent.
\end{example}

Let us now turn to explaining how \atopos that is both bounded and coherent is \emph{determined} by its truncated coherent objects.
The following is an axiomatization of the formal properties satisfied by the truncated coherent objects:

\begin{definition}[{(\pretopos)}]\label{def:pretopos}
	\Acategory $ \Ccal $ is an \defn{\pretopos} if:
	\begin{enumerate}
		\item The \category $ \Ccal $ admits finite limits.

		\item The \category $ \Ccal $ admits finite coproducts, which are universal and disjoint.

		\item Groupoid objects in $ \Ccal $ are effective, and their geometric realizations are universal.
	\end{enumerate}
	If $ \Ccal $ and $ \Dcal $ are \pretopoi, then a functor $ \fupperstar \colon \Ccal \to \Dcal $ is a \emph{morphism of \pretopoi} if $ \fupperstar $ preserves finite limits, finite coproducts, and effective epimorphisms.
\end{definition}

\begin{notation}
	We write $ \Pretop \subset \CATinfty $ for the subcategory consisting of \pretopoi and morphisms of \pretopoi.
	Given \pretopoi $ \Ccal $ and $ \Dcal $, we write
	\begin{equation*}
		\Funpre(\Ccal,\Dcal) \subset \Fun(\Ccal,\Dcal)
	\end{equation*}
	for the full subcategory spanned by the morphisms of \pretopoi.
\end{notation}

\begin{example}[{\SAG{Corollary}{A.6.1.7}}]
	Every \topos is \apretopos.
	If $\Xcal $ is a coherent \topos, then the full subcategory $ \Xcal^{\coh} \subset \Xcal $ spanned by the coherent objects is \apretopos.
\end{example}

\begin{definition}\label{def:boundedpretopos}
	\Apretopos $ \Ccal $ is \defn{bounded} if $ \Ccal $ is small and every object of $ \Ccal $ is truncated.
	We write
	\begin{equation*}
		\Pretopb \subset \Pretop
	\end{equation*}
	for the full subcategory spanned by the bounded \pretopoi.
\end{definition}

\begin{example}
	If $ \Xcal $ is a coherent \topos, then the full subcategory $ \Xcal\cohtrun $ is a bounded \pretopos.
\end{example}

In order to state the key classification theorem for bounded coherent \topoi, we need to fix some notation.
The first is for a natural Grothendieck topology on any \pretopos.

\begin{notation}[(effective epimorphism topology)]\label{ntn:effective_epimorphism_topology}
	Let $ \Ccal $ be \apretopos.
	We write $ \eff $ for the topology on $ \Ccal $ where a sieve $ S $ on $ X \in \Ccal $ is covering if and only if there exist finitely many object $ U_1,\ldots,U_n \in S $ such that the induced map $ U_1 \coproduct \cdots \coproduct U_n \to X $ is an effective epimorphism.
	This is a finitary topology that we refer to as the \defn{effective epimorphism topology} \cite[\SAGsubsec{A.6.2}]{SAG}.
	Importantly, the effective epimorphism topology on \apretopos is a subcanonical topology \SAG{Corollary}{A.6.2.6}.
\end{notation}

\begin{notation}\label{ntn:RTopcoh}
	We write $ \RTopcoh \subset \RTop $ for the subcategory whose objects are coherent \topoi and whose morphisms are coherent geometric morphisms.
	Given coherent \topoi $ \Xcal $ and $ \Ycal $, we write
	\begin{equation*}
		\Funupperstarcoh(\Xcal,\Ycal) \subset \Funupperstar(\Xcal,\Ycal)
	\end{equation*}
	for the full subcategory spanned by those algebraic morphisms $ \fupperstar \colon \Xcal \to \Ycal $ that preserve coherent objects, i.e., the \textit{coherent} algebraic morphisms.
	We write $ \RTopbc \subset \RTopcoh $ for the full subcategory spanned by those coherent \topoi that are also bounded.
\end{notation}

\begin{theorem}[{(classification of bounded coherent \topoi, \SAG{Theorem}{A.7.5.3})}]\label{thm:classification_of_bounded_coherent_topoi}
	The constructions
	\begin{equation*}
		\goesto{\Xcal}{\Xcal\cohtrun} \andeq \goesto{\Ccal}{\Sheff(\Ccal)}
	\end{equation*}
	are mutually inverse equivalences of \categories
	\begin{equation*}
		\RTopbc \simeq \Pretop^{\b,\op} \period
	\end{equation*}
	Moreover, for bounded coherent \topoi $ \Xcal $ and $ \Ycal $, restriction along the inclusion $ \Xcal\cohtrun \inclusion \Xcal $ defines an equivalence if \categories
	\begin{equation*}
		\Funupperstarcoh(\Xcal,\Ycal) \equivalence \Funpre(\Xcal\cohtrun,\Ycal\cohtrun) \period
	\end{equation*}
\end{theorem}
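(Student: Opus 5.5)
The plan is to assemble the equivalence from three ingredients: essential surjectivity on the pretopos side, essential surjectivity on the topos side, and the identification of mapping \categories. I would prove the last one first, since the other two rest on it.

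\textbf{Well-definedness.} Both constructions land where claimed. If $ \Xcal $ is coherent, then $ \Xcal\cohtrun $ is a bounded \pretopos; this is the example following \Cref{def:boundedpretopos}. If $ \Ccal $ is a bounded \pretopos, then $ (\Ccal,\eff) $ is a finitary \site with a terminal object. By \Cref{prop:SAG.A.3.1.3}, $ \Sheff(\Ccal) $ is coherent and every representable is coherent. Since $ \eff $ is subcanonical (\Cref{ntn:effective_epimorphism_topology}), the Yoneda embedding $ \Ccal \inclusion \Sheff(\Ccal) $ is fully faithful, and it lands in truncated objects because truncatedness is preserved by the sheafified Yoneda embedding when the topology is subcanonical. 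Boundedness of $ \Sheff(\Ccal) $ I would get from \Cref{nul:n-localic_in_terms_of_sites}. The site $ (\Ccal_{\leq n-1},\eff) $ is an $ n $-site with finite limits whose sheaf \topos is $ \Lup_n(\Sheff(\Ccal)) $. Passing to the limit over $ n $ identifies $ \Sheff(\Ccal) $ with its bounded reflection, using that every object of $ \Ccal $ lies in some $ \Ccal_{\leq n-1} $.

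\textbf{Mapping \categories.} This is the second assertion of the theorem. For any \topos $ \Ycal $ and a finitary site with finite limits, I would use the universal property of sheaves: left exact left adjoints $ \Sheff(\Ccal) \to \Ycal $ correspond to left exact functors $ \Ccal \to \Ycal $ carrying $ \eff $-covers to effective epimorphisms. In the $ n $-localic setting this is HTT~6.4.5, and I would pass to bounded targets via the limit description above. Such functors automatically preserve finite coproducts, because coproducts in a \pretopos are disjoint and the fold map is an effective epimorphism. Hence they are exactly the morphisms of \pretopoi $ \Ccal \to \Ycal $. Imposing coherence of $ \fupperstar $ is equivalent, by \Cref{def:coherent_geometric_morphism}, to requiring that the image lie in $ \Ycal\cohtrun $. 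Applying this with $ \Ccal = \Xcal\cohtrun $, and using the next step to identify $ \Xcal \equivalent \Sheff(\Xcal\cohtrun) $, gives $ \Funupperstarcoh(\Xcal,\Ycal) \equivalent \Funpre(\Xcal\cohtrun,\Ycal\cohtrun) $.

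\textbf{The two unit comparisons.} There are two comparisons to check.
\begin{enumerate}
	\item For $ \Ccal $ bounded, the Yoneda embedding should identify $ \Ccal $ with $ \Sheff(\Ccal)\cohtrun $. A truncated coherent object $ F $ admits a finite cover $ U_1 \coproduct \cdots \coproduct U_n \surjection F $ by representables. Its Čech nerve has terms $ U^{\times_F k} $, which are again truncated coherent, and $ F $ is their geometric realization. I would induct on the truncation level of $ F $: the terms of the nerve have lower-complexity diagonals, so by induction they are representable. Then $ F $ is the realization of a groupoid object in $ \Ccal $, which is effective in $ \Ccal $ and remains a colimit in the sheaf \topos because $ \eff $-covers become effective epimorphisms. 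The induction needs care because the fiber products are only truncated at the same level, so I would induct on the level at which the diagonal becomes a monomorphism.
	\item For $ \Xcal $ bounded coherent, the natural functor $ \Sheff(\Xcal\cohtrun) \to \Xcal $ should be an equivalence. Both sides are bounded, so it suffices to compare $ n $-localic reflections. The $ (n-1) $-truncated coherent objects form a generating $ n $-site for $ \Lup_n(\Xcal) $, since coherence implies local coherence and truncations of coherent objects are coherent in a coherent \topos; then I would apply \Cref{nul:n-localic_in_terms_of_sites}.
\end{enumerate}

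I expect the main obstacle to be the first comparison, showing that every truncated coherent sheaf on $ \Ccal $ is representable, together with making the boundedness and limit argument rigorous. If the inductive descent argument proves too delicate, my fallback is to cite \SAG{Theorem}{A.7.5.3} directly for these two essential-surjectivity statements.
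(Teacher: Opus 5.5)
The paper gives no argument for this theorem. It is imported from \SAG{Theorem}{A.7.5.3}, and the universal property of $\Sheff(\Ccal)$ against an arbitrary target \topos is \SAG{Proposition}{A.6.4.4}, which the paper invokes separately in the appendix. So your proposal reconstructs the cited result rather than paralleling anything in the text, and your fallback of citing Lurie is exactly what the paper does. The architecture is sound: the universal property, the two comparisons $\Ccal \simeq \Sheff(\Ccal)\cohtrun$ and $\Sheff(\Xcal\cohtrun) \simeq \Xcal$, and boundedness via $n$-localic pieces. Three points need adjusting.

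\textbf{The universal property.} It holds against every target \topos directly, because $\Ccal$ has finite limits. The detour through $n$-localic reflections and ``bounded targets'' is unnecessary.

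\textbf{Boundedness and step order.} The cleanest route is to write $\Sheff(\Ccal)$ as the limit in $\RTop$ of the $n$-localic \topoi $\Sheff(\Ccal_{\leq n-1})$. Cofiltered limits in $\RTop$ are computed on underlying \categories, and each $\eff$-descent condition only involves objects of bounded truncation level. Since bounded \topoi form a reflective subcategory, they are closed under limits. Your stronger claim $\Lup_n(\Sheff(\Ccal)) \simeq \Sheff(\Ccal_{\leq n-1})$ needs $\Ccal_{\leq n-1}$ to generate the $(n-1)$-truncated sheaves. For that you need, e.g., each $\trun_{\leq n-1}(yC)$ to be representable. This is not available at your well-definedness stage. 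It follows from the first comparison once you know that truncations of coherent objects are coherent, a fact you also use without proof in the second comparison. To prove it, descend coherence along the effective epimorphism $X \surjection \trun_{\leq n}X$. The first Čech term of this map is the relative $(n-1)$-truncation of the diagonal of $X$, so you can induct on $n$.

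\textbf{The crux induction.} The step you flag as the crux does not work as written, because no absolute invariant of $F$ drops when you pass to $U \times_F U$. Already in $\An$, take $F = \Bup G$ and $U = \Bup H$ for a nontrivial subgroup $H$ of a finite group $G$. Then $U \times_F U$ is the action groupoid of $H$ on $G/H$. It is $1$-truncated but not $0$-truncated, just like $F$, and its diagonal is no better than that of $F$. What does drop is relative: $U \times_F U \to U \times U$ is a base change of the diagonal of $F$, hence $(n-1)$-truncated.

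So prove the following by induction on $k \geq -2$: for every $C \in \Ccal$ and every coherent $G$ with a $k$-truncated map $G \to yC$, the object $G$ is representable.
\begin{enumerate}
	\item Choose an effective epimorphism $yU \surjection G$ with $U \in \Ccal$. This uses that $G$ is quasicompact and that $y$ preserves finite coproducts.
	\item The map $yU \times_G yU \to y(U \times_C U)$ is $(k-1)$-truncated and has coherent source, so $yU \times_G yU$ is representable by induction.
	\item All higher terms of the Čech nerve are then representable, because $y$ is left exact.
	\item Finish using effectivity of groupoids in $\Ccal$, exactly as you describe.
\end{enumerate}
Your statement is the case $C = 1$.
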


We conclude this subsection by collecting a few technical results that we could not find references for.
They're concerned with describing arbitrary algebraic morphisms from a coherent \topos in terms of truncated coherent objects, and the idempotent completeness of \categories of coherent algebraic morphisms.
First we record what happens in the bounded case:

\begin{proposition}\label{prop:algebraic_morphisms_from_a_bounded_coherent_topos_to_any_topos}
	Let $ \Xcal $ and $ \Ycal $ be \topoi.
	If $ \Xcal $ is bounded coherent, then restriction along the inclusion $ \Xcal\cohtrun \inclusion \Xcal $ defines an equivalence of \categories
	\begin{equation*}
		\Funupperstar(\Xcal,\Ycal) \equivalence \Funpre(\Xcal\cohtrun,\Ycal) \period
	\end{equation*}
\end{proposition}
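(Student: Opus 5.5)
We will reduce to the classification theorem (\Cref{thm:classification_of_bounded_coherent_topoi}) together with the site-theoretic universal property of sheaves. Since $ \Xcal $ is bounded coherent, \Cref{thm:classification_of_bounded_coherent_topoi} gives an equivalence $ \Xcal \equivalent \Sheff(\Ccal) $, where $ \Ccal \colonequals \Xcal\cohtrun $ is a bounded \pretopos equipped with the effective epimorphism topology of \Cref{ntn:effective_epimorphism_topology}. By \HTT{Proposition}{6.2.3.20}, for any \topos $ \Ycal $, restriction along the sheafified Yoneda embedding $ \Ccal \to \Sheff(\Ccal) $ identifies $ \Funupperstar(\Sheff(\Ccal),\Ycal) $ with the full subcategory of $ \Fun(\Ccal,\Ycal) $ spanned by the left exact functors that carry $ \eff $-covering sieves to effective epimorphisms. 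This uses that $ \Ccal $ has finite limits and that the topology is subcanonical, so the Yoneda embedding lands in sheaves and is left exact. Precomposing this identification with the equivalence $ \Xcal \equivalent \Sheff(\Ccal) $ recovers restriction along $ \Xcal\cohtrun \inclusion \Xcal $.

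It remains to identify the left exact, covering-preserving functors $ \Ccal \to \Ycal $ with $ \Funpre(\Ccal,\Ycal) $. By definition, a sieve on $ X $ is covering if and only if it contains $ U_1,\ldots,U_n $ with $ \coprod_i U_i \to X $ an effective epimorphism in $ \Ccal $.

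First, let $ f \colon \Ccal \to \Ycal $ be a morphism of \pretopoi. Then $ f $ is left exact. It also sends $ \coprod_i U_i \surjection X $ to an effective epimorphism $ \coprod_i f(U_i) \surjection f(X) $, since $ f $ preserves finite coproducts and effective epimorphisms. Hence $ f $ carries covering sieves to effective epimorphisms.

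Conversely, let $ f $ be left exact and send covers to effective epimorphisms. Since a single effective epimorphism generates a covering sieve, $ f $ preserves effective epimorphisms. For finite coproducts, take the cover $ \{U_i \to \coprod_i U_i\} $. Its image $ \coprod_i f(U_i) \to f(\coprod_i U_i) $ is an effective epimorphism. It is also a monomorphism: coproducts in $ \Ccal $ are disjoint, so $ U_i \times_{\coprod} U_j $ is initial for $ i \neq j $, and each $ U_i \to \coprod U_i $ is a monomorphism. Left exactness preserves the monomorphisms and the pullbacks, and universality of coproducts in $ \Ycal $ reduces the monomorphism claim to these two facts.

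The main obstacle is that left exactness alone does not obviously preserve the initial object, nor the disjointness needed in the binary case. We handle the initial object via the empty cover: the empty sieve covers $ \emptyset $, because $ \emptyset \to \emptyset $ is an effective epimorphism. Its image is therefore an effective epimorphism $ \emptyset_{\Ycal} \to f(\emptyset) $, which forces $ f(\emptyset) $ to be initial. For $ i \neq j $, $ f(U_i) \times_{f(\coprod)} f(U_j) \equivalent f(\emptyset) $ is then initial, so the images are disjoint. A monomorphic effective epimorphism is an equivalence, so $ f $ preserves finite coproducts. Thus $ f $ is a morphism of \pretopoi, which completes the identification and gives the asserted equivalence.
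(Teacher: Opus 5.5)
Your proof is correct and follows the same route as the paper: you reduce via \Cref{thm:classification_of_bounded_coherent_topoi} to $\Xcal \equivalent \Sheff(\Xcal\cohtrun)$ and then use the universal property of $\Sheff(\Ccal)$ for a bounded \pretopos $\Ccal$. The only difference is that the paper cites \SAG{Proposition}{A.6.4.4} for that universal property, whereas you rederive it from \HTT{Proposition}{6.2.3.20}, checking directly (including the empty-cover argument for the initial object) that left exact, cover-preserving functors are exactly the morphisms of \pretopoi.
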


\begin{proof}
	Since $ \Xcal $ is bounded coherent, by \Cref{thm:classification_of_bounded_coherent_topoi} we have $ \Xcal \equivalent \Sheff(\Xcal\cohtrun) $.
	Hence the claim follows from \SAG{Proposition}{A.6.4.4}. 
\end{proof}

\begin{lemma}\label{lem:coherent_objects_of_a_bounded_coherent_topos_are_closed_under_retracts}
	Let $ \Xcal $ be a bounded coherent \topos.
	Then the full subcategory $ \Xcal\cohtrun \subset \Xcal $ is closed under retracts.
	In particular, $ \Xcal\cohtrun $ is a small idempotent complete \category.
\end{lemma}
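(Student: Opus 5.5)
To prove \Cref{lem:coherent_objects_of_a_bounded_coherent_topos_are_closed_under_retracts}, the plan is to use the characterization of coherent truncated objects in terms of compactness in the truncated pieces. Let $ Z \in \Xcal $ be a retract of an object $ U \in \Xcal\cohtrun $, via maps $ Z \to U \to Z $ composing to the identity. First I will show that $ Z $ is truncated. If $ U $ is $ n $-truncated, then $ Z $ is too: $ n $-truncated objects are closed under retracts in any \category, since $ \Map(W,Z) $ is a retract of the $ n $-truncated anima $ \Map(W,U) $ and retracts of $ n $-truncated anima are $ n $-truncated.

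Next I will show that $ Z $ is coherent, and this is the main step. I would use the criterion from \cite[\SAGsubsec{A.2.3}]{SAG}: in a locally coherent \topos, an $ n $-truncated object $ Z $ is coherent if and only if it is a compact object of $ \Xcal_{\leq n} $. A bounded coherent \topos is locally coherent, since $ \Xcal \equivalent \Sheff(\Xcal\cohtrun) $ by \Cref{thm:classification_of_bounded_coherent_topoi} and \Cref{prop:SAG.A.3.1.3} applies. Compact objects of $ \Xcal_{\leq n} $ are closed under retracts, because $ \Map(Z,-) $ is a retract of $ \Map(U,-) $ and filtered colimits in $ \An $ commute with retracts. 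Combining these, $ Z $ is coherent.

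The obstacle is to ensure that this compactness characterization applies with the hypotheses in force. If it does not, I would argue directly with the recursive \Cref{rec:coherence}. I would show that $ \Xcal_{/Z} $ is $ k $-coherent by induction on $ k $, noting that a retract of a quasicompact object is quasicompact: given a cover of $ Z $, pull it back along $ U \to Z $, extract a finite subcover, and push it forward along $ U \to Z $, which is an effective epimorphism because it has a section. For the inductive step I would use that fiber products over $ Z $ of coherent objects are retracts of fiber products over $ U $.

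Finally, smallness follows from \Cref{thm:classification_of_bounded_coherent_topoi}, which says that $ \Xcal\cohtrun $ is a bounded \pretopos and in particular small. Idempotent completeness follows because $ \Xcal $ is idempotent complete, being presentable, and $ \Xcal\cohtrun $ is a full subcategory closed under retracts.
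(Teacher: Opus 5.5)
Your proposal is correct and is essentially the argument the paper defers to (the proof of \SAG{Corollary}{A.7.5.4}): truncatedness passes to retracts, and in the locally coherent $\infty$-topos $\Xcal$ the $n$-truncated coherent objects (for $n \geq 0$) are exactly the compact objects of $\Xcal_{\leq n}$, a class closed under retracts; smallness and idempotent completeness then follow as you say. Your fallback induction is not needed (as sketched, its inductive step would have to be phrased for retracts of $k$-coherent objects rather than coherent ones), and the primary route is complete on its own.
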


\begin{proof}
	See the proof of \SAG{Corollary}{A.7.5.4}.
\end{proof}

\begin{lemma}\label{lem:coherent_geometric_morphisms_are_closed_under_retracts}
	Let $ \Xcal $ and $ \Ycal $ be bounded coherent \topoi.
	Then the full subcategory
	\begin{equation*}
		\Funupperstarcoh(\Xcal,\Ycal) \subset \Funupperstar(\Xcal,\Ycal)
	\end{equation*}
	is closed under retracts.
	Hence, $ \Funupperstarcoh(\Xcal,\Ycal) $ is a small idempotent complete \category.
\end{lemma}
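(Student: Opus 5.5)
Via \Cref{thm:classification_of_bounded_coherent_topoi} and \Cref{prop:algebraic_morphisms_from_a_bounded_coherent_topos_to_any_topos}, the claim becomes a statement about morphisms of \pretopoi out of $ \Ccal \colonequals \Xcal\cohtrun $. Restriction along $ \Ccal \inclusion \Xcal $ gives an equivalence
\begin{equation*}
	\Funupperstar(\Xcal,\Ycal) \equivalence \Funpre(\Ccal,\Ycal) \comma
\end{equation*}
and it carries $ \Funupperstarcoh(\Xcal,\Ycal) $ onto $ \Funpre(\Ccal,\Ycal\cohtrun) $. This uses \Cref{def:coherent_geometric_morphism}: an algebraic morphism is coherent if and only if it sends $ \Xcal\cohtrun $ into $ \Ycal\cohtrun $. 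Since $ \Ccal $ is small, $ \Funpre(\Ccal,\Ycal\cohtrun) $ is small. The same equivalence also yields smallness of $ \Funupperstarcoh(\Xcal,\Ycal) $ directly, by the last part of \Cref{thm:classification_of_bounded_coherent_topoi}. So it remains to show that the full subcategory $ \Funpre(\Ccal,\Ycal\cohtrun) \subset \Funpre(\Ccal,\Ycal) $ is closed under retracts.

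Let $ f^\ast $ be a retract of a coherent $ g^\ast $ in $ \Funpre(\Ccal,\Ycal) $, with maps $ i \colon f^\ast \to g^\ast $ and $ r \colon g^\ast \to f^\ast $ satisfying $ ri \simeq \id $. Retracts in a functor \category are computed objectwise, including the higher coherences, since evaluation at an object is a functor. Hence for each $ U \in \Ccal $, the object $ f^\ast(U) $ is a retract of $ g^\ast(U) \in \Ycal\cohtrun $. Now apply \Cref{lem:coherent_objects_of_a_bounded_coherent_topos_are_closed_under_retracts} to $ \Ycal $: because $ \Ycal $ is bounded coherent, $ \Ycal\cohtrun $ is closed under retracts in $ \Ycal $. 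Thus $ f^\ast(U) \in \Ycal\cohtrun $, and so $ f^\ast $ is coherent.

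Idempotent completeness then follows formally. $ \Funupperstar(\Xcal,\Ycal) $ is accessible by \HTT{Proposition}{6.3.1.13} and is closed under filtered colimits in $ \Fun(\Xcal,\Ycal) $. In particular, it admits sequential colimits, so it is idempotent complete, since idempotents split via sequential colimits. A full subcategory closed under retracts in an idempotent complete \category is itself idempotent complete.

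The step I expect to need the most care is the identification of the coherent morphisms with $ \Funpre(\Ccal,\Ycal\cohtrun) $ under the equivalence. Concretely, preserving all coherent objects must reduce to preserving truncated coherent ones, which is exactly \cite[Corollary 3.4.5]{arXiv:1807.03281} as recorded in \Cref{def:coherent_geometric_morphism}. Once that is granted, the retract argument is immediate.
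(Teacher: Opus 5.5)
Your proof is correct and is essentially the paper's argument: a retract of a coherent $\gupperstar$ is objectwise a retract, so closure of $\Ycal\cohtrun$ under retracts in $\Ycal$ shows that $\fupperstar$ preserves truncated coherent objects, and smallness comes from the classification theorem (note that this also needs $\Ycal\cohtrun$ to be small, not just $\Ccal$). Your detour through $\Funpre(\Ccal,\Ycal)$ is harmless but unnecessary. Your explicit justification that $\Funupperstar(\Xcal,\Ycal)$ is idempotent complete, because it admits filtered colimits, fills in what the paper leaves implicit when it says closure under retracts ``immediately'' gives idempotent completeness.
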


\begin{proof}
	By the classification of bounded coherent \topoi, 
	\begin{equation*}
		\Funupperstarcoh(\Xcal,\Ycal) \equivalent \Funpre(\Xcal\cohtrun,\Ycal\cohtrun) \period
	\end{equation*}
	Since both $ \Xcal\cohtrun $ and $ \Ycal\cohtrun $ are small \categories, $ \Funupperstarcoh(\Xcal,\Ycal) $ is also a small \category.

	For the statement about closure under retracts (which immediately implies idempotent completeness), let $ \fupperstar \colon \Xcal \to \Ycal $ be an algebraic morphism that is a retract in $ \Funupperstar(\Xcal,\Ycal) $ of an algebraic morphism $ \gupperstar \colon \Xcal \to \Ycal $ that preserves truncated coherent objects.
	Then for each truncated coherent object $ X \in \Xcal $, the object $ \fupperstar(X) $ is a retract of the truncated coherent object $ \gupperstar(X) $.
	\Cref{lem:coherent_objects_of_a_bounded_coherent_topos_are_closed_under_retracts} then implies that $ \fupperstar(X) $ is truncated coherent; that is $ \fupperstar $ preserves truncated coherent objects, as desired.
\end{proof}

For the condensed \categories of points considered in this note, we're interested in (coherent) geometric morphisms from an arbitrary coherent \topos to the \topos of sheaves on a profinite set. 
The latter is bounded and Postnikov complete.
In this situation, variants of \Cref{prop:algebraic_morphisms_from_a_bounded_coherent_topos_to_any_topos,lem:coherent_geometric_morphisms_are_closed_under_retracts} hold without the boundedness assumption on $ \Xcal $.

\begin{proposition}\label{prop:Funupperstar_to_a_bounded_Postnikov_complete_topos}
	Let $ \Xcal $ and $ \Ycal $ be \topoi.
	If $ \Ycal $ is bounded and Postnikov complete, then: 
	\begin{enumerate}
		\item\label{prop:Funupperstar_to_a_bounded_Postnikov_complete_topos.1} The natural geometric morphism $ \Xcal \to \Xcal^{\b} $ induces an equivalence
		\begin{equation*}
			\Funupperstar(\Xcal,\Ycal) \equivalence \Funupperstar(\Xcal^{\b},\Ycal) \period
		\end{equation*}

		\item\label{prop:Funupperstar_to_a_bounded_Postnikov_complete_topos.2} If $ \Xcal $ is coherent, then restriction along the inclusion $ \Xcal\cohtrun \inclusion \Xcal $ defines an equivalence of \categories
		\begin{equation*}
			\Funupperstar(\Xcal,\Ycal) \equivalence \Funpre(\Xcal\cohtrun,\Ycal) \period
		\end{equation*}

		\item\label{prop:Funupperstar_to_a_bounded_Postnikov_complete_topos.3} If $ \Xcal $ and $ \Ycal $ are coherent, then the natural geometric morphism $ \Xcal \to \Xcal^{\b} $ induces an equivalence
		\begin{equation*}
			\Funupperstarcoh(\Xcal,\Ycal) \equivalence \Funupperstarcoh(\Xcal^{\b},\Ycal) \period
		\end{equation*}
		Moreover, $ \Funupperstarcoh(\Xcal,\Ycal) $ is a small idempotent complete \category.
	\end{enumerate}
\end{proposition}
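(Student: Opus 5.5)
I would prove the three parts in the order (2), (1), (3). For \eqref{prop:Funupperstar_to_a_bounded_Postnikov_complete_topos.1}, the plan is to exploit the fact that $ \Ycal $ is bounded, so $ \Ycal $ is the limit of its $ n $-localic reflections. For any \topos $ \Xcal $, the universal property of $ \Xcal^{\b} $ from \Cref{rec:bounded_topoi} gives an equivalence on mapping spaces
\begin{equation*}
	\Map_{\RTop}(\Ycal,\Xcal^{\b}) \equivalent \Map_{\RTop}(\Ycal,\Xcal) \period
\end{equation*}
What we need is a statement about the full \categories $ \Funlowerstar $. To upgrade it, I would test against $ \Ycal \times \Delta^1 $-type cotensors. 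Concretely, $ \Fun(\Delta^1,-) $ of a geometric morphism category is computed by geometric morphisms out of $ \Sh(\Ycal;\Delta^1) \equivalent \Ycal \times \Sh(\Delta^1) $, and this \topos is still bounded. The equivalence on mapping spaces then upgrades to one on \categories. Alternatively, I would simply use that $ \Funlowerstar(\Ycal,\Xcal) \to \Funlowerstar(\Ycal_{\leq n-1},\Xcal_{\leq n-1}) $ identifies maps into $ \Lup_n \Xcal $, and pass to the limit over $ n $. Either way, we obtain $ \Funupperstar(\Xcal^{\b},\Ycal) \equivalent \Funupperstar(\Xcal,\Ycal) $, with the functor given by precomposition with the comparison morphism.

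For \eqref{prop:Funupperstar_to_a_bounded_Postnikov_complete_topos.2}, I would combine \eqref{prop:Funupperstar_to_a_bounded_Postnikov_complete_topos.1} with \Cref{prop:algebraic_morphisms_from_a_bounded_coherent_topos_to_any_topos} applied to $ \Xcal^{\b} $. This is legitimate because $ \Xcal^{\b} $ is bounded coherent by \Cref{rec:coherence_is_detected_on_Postnikov_completions_and_bounded_reflections}. It gives
\begin{equation*}
	\Funupperstar(\Xcal,\Ycal) \equivalent \Funupperstar(\Xcal^{\b},\Ycal) \equivalent \Funpre((\Xcal^{\b})\cohtrun,\Ycal) \period
\end{equation*}
Moreover, $ (\Xcal^{\b})\cohtrun \equivalent \Xcal\cohtrun $ via the comparison morphism, again by \Cref{rec:coherence_is_detected_on_Postnikov_completions_and_bounded_reflections}. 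It remains to check that the resulting composite is restriction along $ \Xcal\cohtrun \inclusion \Xcal $. This follows from the commutative triangle formed by the inclusions of truncated coherent objects and the pullback $ \Xcal^{\b} \to \Xcal $.

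For \eqref{prop:Funupperstar_to_a_bounded_Postnikov_complete_topos.3}, I would observe that under the equivalence of \eqref{prop:Funupperstar_to_a_bounded_Postnikov_complete_topos.1}, an algebraic morphism $ \Xcal \to \Ycal $ is coherent if and only if the corresponding morphism out of $ \Xcal^{\b} $ is. This holds because both conditions are detected on truncated coherent objects (\Cref{def:coherent_geometric_morphism}), and the truncated coherent objects of $ \Xcal $ and $ \Xcal^{\b} $ agree. So the equivalence restricts to the full subcategories of coherent morphisms. Smallness and idempotent completeness then follow from \Cref{lem:coherent_geometric_morphisms_are_closed_under_retracts} applied to the bounded coherent \topoi $ \Xcal^{\b} $ and $ \Ycal $.

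The main obstacle is \eqref{prop:Funupperstar_to_a_bounded_Postnikov_complete_topos.1}, specifically promoting the universal property of the bounded reflection from mapping spaces in $ \RTop $ to an equivalence of functor \categories. It is also not yet clear where Postnikov completeness of $ \Ycal $ enters, given that boundedness alone gives the mapping-space statement. My first attempt would go through truncated objects. By \Cref{obs:Xpost_Xbdd_and_X_have_the_same_truncated_objects}, left exact left adjoints $ \Xcal \to \Ycal $ with $ \Ycal $ Postnikov complete are determined by the compatible family of functors $ \Xcal_{\leq n} \to \Ycal_{\leq n} $. Indeed, such a functor factors through $ \Xcal^{\post} $, and $ \Ycal \equivalent \lim_n \Ycal_{\leq n} $. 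Since these families are the same for $ \Xcal $ and $ \Xcal^{\b} $, this yields the equivalence of \categories directly, and it is where Postnikov completeness is used.
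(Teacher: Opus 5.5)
Parts (2) and (3) are fine. Your (2) is the paper's argument. In (3), restricting the equivalence of (1) directly works because it is precomposition with $b^{\ast}\colon\Xcal^{\b}\to\Xcal$, which identifies truncated coherent objects; this is slightly more direct than the paper's passage through (2) and \Cref{thm:classification_of_bounded_coherent_topoi}. One small point: your announced order (2), (1), (3) is circular, since your (2) uses (1).

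The genuine problem is (1). Your main argument asserts that the universal property of $\Xcal^{\b}$ gives $\Map_{\RTop}(\Ycal,\Xcal)\simeq\Map_{\RTop}(\Ycal,\Xcal^{\b})$ for bounded $\Ycal$. It does not. By \Cref{rec:bounded_topoi}, $(-)^{\b}$ is \emph{left} adjoint to the inclusion of bounded \topoi into $\RTop$. So it controls geometric morphisms \emph{out of} $\Xcal^{\b}$ into bounded \topoi, i.e., algebraic morphisms from a bounded \topos into $\Xcal$ versus into $\Xcal^{\b}$. That is the wrong variance for (1).

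Worse, the assertion is false for merely bounded $\Ycal$. Take $\Ycal$ bounded but not hypercomplete (e.g.\ sheaves on the Hilbert cube) and $\Xcal=\Ycal^{\post}$. Then $\Xcal^{\b}\simeq\Ycal$ via the canonical geometric morphism $\Ycal^{\post}\to\Ycal$, but $\mathrm{id}_{\Ycal}$ does not lift. A lift would be a left exact left adjoint $s^{\ast}\colon\Ycal^{\post}\to\Ycal$ with $s^{\ast}t^{\ast}\simeq\mathrm{id}_{\Ycal}$. Since $t^{\ast}$ sends every $\infty$-connective object to the terminal object, this would force $\Ycal$ to be hypercomplete.

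Your alternative has the same hole. Passing to the limit identifies $\Funlowerstar(\Ycal,\Xcal^{\b})$ with $\lim_n\Funlowerstar(\Ycal_{\leq n-1},\Xcal_{\leq n-1})$ for \emph{every} $\Ycal$. But comparing $\Funlowerstar(\Ycal,\Xcal)$ with that limit is exactly where Postnikov completeness is needed. So the real obstacle is not upgrading from mapping anima to functor \categories.

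Your last paragraph has the right mechanism, and it should be the proof rather than a fallback. Phrase it through the universal property in \Cref{rec:Postnikov_completeness} rather than ``determined by compatible families'', which by itself does not say which families occur. The argument runs as follows.
\begin{enumerate}
	\item Since $(-)^{\post}$ is right adjoint to the inclusion of Postnikov complete \topoi and $\Ycal$ is Postnikov complete, precomposition with $t^{\ast}$ gives equivalences $\Funupperstar(\Xcal^{\post},\Ycal)\simeq\Funupperstar(\Xcal,\Ycal)$ and $\Funupperstar((\Xcal^{\b})^{\post},\Ycal)\simeq\Funupperstar(\Xcal^{\b},\Ycal)$.
	\item The functor $b^{\ast}$ induces an equivalence $(\Xcal^{\b})^{\post}\simeq\Xcal^{\post}$, because it is an equivalence on truncated objects (\Cref{obs:Xpost_Xbdd_and_X_have_the_same_truncated_objects}).
	\item The evident commutative square then shows that $-\circ b^{\ast}$ is an equivalence.
\end{enumerate}

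This is a different route from the paper's. The paper passes through $\Funupperstar(\Xcal^{\post},\Ycal)\simeq\Funupperstar((\Xcal^{\post})^{\b},\Ycal)$ and uses boundedness of $\Ycal$ to transport maps across the equivalence between Postnikov complete and bounded \topoi. Your route shows that (1), (2), and the equivalence in (3) only use that $\Ycal$ is Postnikov complete. In your argument, boundedness of $\Ycal$ is then used only to invoke \Cref{lem:coherent_geometric_morphisms_are_closed_under_retracts} for smallness and idempotent completeness.
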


\begin{proof}
	For \eqref{prop:Funupperstar_to_a_bounded_Postnikov_complete_topos.1}, notice that since $ \Ycal $ is both bounded and Postnikov complete by \Cref{rec:Postnikov_completeness} we have natural equivalences
	\begin{align*}
		\Funupperstar(\Xcal,\Ycal) &\equivalent \Funupperstar(\Xcal^{\post},\Ycal) \\
		&\equivalent \Funupperstar((\Xcal^{\post})^{\b},\Ycal) \\
		&\equivalent \Funupperstar(\Xcal^{\b},\Ycal) \period
	\end{align*}

	For \eqref{prop:Funupperstar_to_a_bounded_Postnikov_complete_topos.2}, note that since the natural geometric morphism $ \Xcal \to \Xcal^{\b} $ restricts to an equivalence on truncated coherent objects, the claim follows from \eqref{prop:Funupperstar_to_a_bounded_Postnikov_complete_topos.1} and \Cref{prop:algebraic_morphisms_from_a_bounded_coherent_topos_to_any_topos}.
	Similarly, the equivalence in \eqref{prop:Funupperstar_to_a_bounded_Postnikov_complete_topos.3} follows from the fact that the natural geometric morphism $ \Xcal \to \Xcal^{\b} $ restricts to an equivalence on truncated coherent objects, item \eqref{prop:Funupperstar_to_a_bounded_Postnikov_complete_topos.2}, and \Cref{thm:classification_of_bounded_coherent_topoi}.
	The statement that $ \Funupperstarcoh(\Xcal,\Ycal) $ is small and idempotent complete then follows from \Cref{lem:coherent_geometric_morphisms_are_closed_under_retracts}.
\end{proof}


\subsection{Spectral \texorpdfstring{$\infty$}{∞}-topoi}\label{subsec:spectral_topoi}

We now briefly review the theory of \textit{spectral \topoi} introduced in our work with Barwick and Glasman \cite{arXiv:1807.03281}.
In addition to bounded coherence, spectrality asks for an additional condition on the \category of points of \atopos.

\begin{recollection}\label{rec:layered_categories}
	\Acategory $ \Ccal $ is \defn{layered} if for each object $ x \in \Ccal $, every endomorphism $ x \to x $ is an equivalence.
	Equivalently, $ \Ccal $ is layered if and only if there exists a poset $ P $ and a conservative functor $ \Ccal \to P $.
	We write $ \Lay \subset \Catinfty $ for the full subcategory spanned by the layered \categories.
\end{recollection}

\begin{recollection}[{(spectral \topoi)}]\label{rec:spectral_topoi}
	\Atopos $ \Xcal $ is \defn{spectral} if $ \Xcal $ is bounded coherent and the \category of points $ \Pt(\Xcal) $ is layered.
	If $ \Xcal $ is a spectral \topos, then every point of $ \Xcal $ is coherent.
	We write \smash{$ \RTopspec \subset \RTopcoh $} for the full subcategory spanned by the spectral \topoi.
\end{recollection}

\noindent Here's the most important example:

\begin{example}
	If $ X $ is a qcqs scheme, then the étale \topos of $ X $ is spectral.
\end{example}

One of the main results is that spectral \topoi admit an even more simple classification than bounded coherent \topoi, in terms of \textit{profinite} layered \categories.
Here's the correct notion of finiteness:

\begin{recollection}
	An anima $ X $ is \defn{\pifinite} if $ \uppi_0(X) $ is finite, $ X $ is truncated, and for each integer $ n \geq 1 $ and point $ x \in X $, the group $ \uppi_n(X,x) $ is finite.
	\Acategory $ \Ccal $ is \defn{\pifinite} if $ \Ccal $ has finitely many objects up to equivalence and all mapping anima are \pifinite.
	We write $ \Catfin \subset \Catinfty $ for the full subcategory spanned by the \pifinite \categories, and $ \Layfin \subset \Catfin $ for the full subcategory spanned by the \pifinite layered \categories. 
\end{recollection}

\begin{example}
	If $ \Ccal $ is a \pifinite layered \category, then there is a natural equivalence
	\begin{equation*}
		\Ccal \equivalence \Pt(\Fun(\Ccal,\An)) \period
	\end{equation*}
	Moreover, the \topos $ \Fun(\Ccal,\An) $ is spectral.
\end{example}

\begin{recollection}[{(\categorical Hochster duality)}]
	The assignment $ \Ccal \mapsto \Fun(\Ccal,\An) $ with functoriality given by right Kan extension defines a left exact functor
	\begin{equation*}
		\Layfin \to \RTopcoh \period
	\end{equation*}
	The \category $ \RTopcoh $ admits limits, so by the universal property of \proobjects, this functor extends to a limit-preserving functor
	\begin{equation}\label{eq:embedding_of_profinite_layered_categories_into_coherent_topoi}
		\Pro(\Layfin) \to \RTopcoh \period
	\end{equation}
	One of the main results of our work with Barwick and Glasman is that this functor is fully faithful with image $ \RTopspec $ \cite[Theorem 9.3.1]{arXiv:1807.03281}.
	We refer to this result as \textit{\categorical Hochster duality}; it generalizes Hochster's equivalence between the category of \proobjects in the category of finite posets and the category of spectral spaces and quasicompact maps \cites{Hochster:primeideal}{Hochster:thesis}.
	
	Even better, the fully faithful functor \eqref{eq:embedding_of_profinite_layered_categories_into_coherent_topoi} is a right adjoint.
	We denote the left adjoint by
	\begin{equation*}
		\widehat{\Pi}_{(\infty,1)} \colon \RTopcoh \to \Pro(\Layfin) \comma
	\end{equation*}
	and for a coherent \topos $ \Xcal $, refer to $ \widehat{\Pi}_{(\infty,1)}(\Xcal) $ as the \defn{profinite stratified shape} of $ \Xcal $.
	The composite
	\begin{equation*}
		\begin{tikzcd}[sep=3.5em]
			\RTopspec \arrow[r, "\widehat{\Pi}_{(\infty,1)}", "\sim"'] & \Pro(\Layfin) \arrow[r, "\lim"] & \Catinfty
		\end{tikzcd}
	\end{equation*}
	is equivalent to the functor sending a spectral \topos $ \Xcal $ to its \category $ \Pt(\Xcal) $ of points.
\end{recollection}

\begin{recollection}[{(relationship to condensed mathematics)}]\label{rec:relationship_to_condensed_math}
	The constant sheaf functor
	\begin{equation*}
		\Catfin \inclusion \Cond(\Catinfty)
	\end{equation*}
	extends along cofiltered limits to a fully faithful functor
	\begin{equation*}
		\Pro(\Catfin) \inclusion \Cond(\Catinfty) \period
	\end{equation*}
	By \categorical Hochster duality, the composite
	\begin{equation*}
		\begin{tikzcd}[sep=3.5em]
			\RTopspec \arrow[r, "\widehat{\Pi}_{(\infty,1)}", "\sim"'] & \Pro(\Layfin) \arrow[r, hooked] & \Pro(\Catfin) \arrow[r, hooked] & \Cond(\Catinfty)
		\end{tikzcd}
	\end{equation*}
	is given by the assignment
	\begin{equation*}
		\Xcal \mapsto [K \mapsto \Funupperstarcoh(\Xcal,\Sh(K))] \period
	\end{equation*}
	That is, the composite is the functor $ \Ptcohbf $ of \Cref{def:condensed_category_of_coherent_points}. 
	As a result, 
	\begin{equation*}
		\Ptcohbf \colon \RTopspec \to \Cond(\Catinfty)
	\end{equation*}
	is fully faithful.
\end{recollection}

The key property of spectral \topoi that we need to prove our main result (\Cref{thm:extra_adjoint_for_spectral_topoi}) is that the value of \smash{$ \Ptcohbf(\Xcal) $} on a Čech--Stone compactification is particularly simple:

\begin{lemma}\label{lem:value_of_Ptcohbf_on_betaS}
	Let $ \Xcal $ be a spectral \topos and let $ S $ be a set.
	Then the restriction functor
	\begin{equation*}
		\begin{tikzcd}[sep=3.5em]
			\Funupperstarcoh(\Xcal,\Sh(\upbeta(S))) \arrow[r, "\jupperstar \of -"] & \Funupperstar(\Xcal,\Sh(S)) \equivalent \Pt(\Xcal)^S 
		\end{tikzcd}
	\end{equation*}
	is an equivalence of \categories.
\end{lemma}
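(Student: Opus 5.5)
The plan is to use \categorical Hochster duality, via \Cref{rec:relationship_to_condensed_math}, to rewrite both sides of the restriction functor in terms of the profinite stratified shape. Since $ \Xcal $ is spectral, write $ \widehat{\Pi}_{(\infty,1)}(\Xcal) = \{\Pi_\alpha\}_{\alpha} $ as a cofiltered system of \pifinite layered \categories. Then $ \Xcal \equivalent \lim_\alpha \Fun(\Pi_\alpha,\An) $ in $ \RTopcoh $, and $ \Pt(\Xcal) \equivalent \lim_\alpha \Pi_\alpha $.

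By \Cref{rec:relationship_to_condensed_math}, for a profinite set $ K $ we have
\begin{equation*}
	\Funupperstarcoh(\Xcal,\Sh(K)) \equivalent \lim_\alpha \Map_{\Cond}(K,\Pi_\alpha) \equivalent \lim_\alpha \Map_{\mathrm{cts}}(K,\Pi_\alpha) \comma
\end{equation*}
where $ \Map_{\mathrm{cts}}(K,\Pi_\alpha) $ denotes the value of the constant condensed \category $ \Pi_\alpha $ on $ K $. I expect this value to be $ \colim_{K \to K_i} \Fun(K_i,\Pi_\alpha) $ over finite quotients $ K \to K_i $, that is, locally constant functors. For $ K = \upbeta(S) $, the finite quotients of $ \upbeta(S) $ are exactly the finite partitions of $ S $ (\Cref{rec:clopens_of_betaS}). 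Hence the value of the constant sheaf is the colimit over finite partitions of $ S $ of $ \Fun(S/{\sim},\Pi_\alpha) $. The restriction along $ j $ sends this into $ \Fun(S,\Pi_\alpha) = \Pi_\alpha^S $.

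The key step is to show that for a \pifinite \category $ \Ccal $, the map
\begin{equation*}
	\colim_{\textup{finite partitions } P \textup{ of } S} \Ccal^{P} \to \Ccal^{S}
\end{equation*}
is an equivalence. Here is how I would argue it.

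\emph{Essential surjectivity.} Because $ \Ccal $ has finitely many equivalence classes of objects, any $ S $-indexed family factors, up to equivalence, through the finite partition by isomorphism type.

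\emph{Full faithfulness.} Mapping anima in the colimit are filtered colimits over refinements of $ \prod_{p} \Map_{\Ccal}(x_p,y_p) $, which I would compare with $ \prod_{s \in S} \Map_{\Ccal}(x_s,y_s) $. Since each mapping anima is \pifinite, there are only finitely many homotopy types of the relevant data. The homotopy groups of a product of $ S $ copies of \pifinite anima taken from a finite list are products of finitely many powers $ \uppi_n(F)^{S_i} $, and finite groups satisfy $ G^{S_i} = \colim_{P} G^{P} $ over finite partitions. Truncatedness (all $ \Pi_\alpha $ are truncated uniformly in $ \alpha $? no, but each is truncated) lets me pass this through Postnikov towers with finitely many stages.

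This is the main obstacle: the coherence of equivalences and the homotopy-coherent bookkeeping in the full-faithfulness argument. An alternative that avoids it is the Łoś/ultraproduct viewpoint. Realize $ \Sh(\upbeta(S)) $-valued coherent points as pretopos maps $ \Xcal\cohtrun \to \Sh(\upbeta(S))\cohtrun $, and use that for a \pifinite $ \Ccal $ the ultraproduct of $ \Ccal $-valued families is computed on a $ \mu $-large constant piece.

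Once the level-$ \alpha $ equivalence $ \Map_{\Cond}(\upbeta(S),\Pi_\alpha) \equivalence \Pi_\alpha^S $ is established, naturality in $ \alpha $ gives the result. Passing to the limit over $ \alpha $ identifies the restriction functor with
\begin{equation*}
	\lim_\alpha \Pi_\alpha^S \equivalent (\lim_\alpha \Pi_\alpha)^S \equivalent \Pt(\Xcal)^S \comma
\end{equation*}
which is an equivalence. The final step is to check that this identification agrees with $ \jupperstar \of - $. This holds because both are induced by the map of condensed sets $ S \to \upbeta(S) $, and $ \Funupperstar(\Xcal,\Sh(S)) = \Pt(\Xcal)^S $ is the value of $ \Ptcohbf(\Xcal) $ on the (non-compact) discrete $ S $ computed as a product.
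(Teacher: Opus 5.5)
Your proposal is correct and is essentially the paper's first argument. You reduce via \Cref{rec:relationship_to_condensed_math} to constant condensed $\infty$-categories on the $\uppi$-finite $\Pi_\alpha$ and then pass to the limit. The one difference is that you prove by hand the input the paper cites, namely that $\Ccal(\upbeta(S)) \to \Ccal^S$ is an equivalence for such $\Ccal$ \cite[Proposition 2.22]{arXiv:2510.07443}; this is equivalent to $S^{\wedge}_{\uppi} \simeq \upbeta(S)$, which is the paper's second argument.

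The homotopy-coherent bookkeeping you flag as the main obstacle is not actually one. Mapping anima in a filtered colimit of $\infty$-categories are filtered colimits of mapping anima, so full faithfulness reduces to showing $\colim_{Q} M^{Q} \to M^{T}$ is an equivalence for a $\uppi$-finite anima $M$. That map is an isomorphism on all homotopy groups at all basepoints, because $\colim_Q G^Q = G^T$ for finite $G$. Truncatedness is not needed for this step.
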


\begin{proof}
	There are two ways to see this; both use that every point of a spectral \topos is coherent (see \Cref{rec:spectral_topoi}).
	The first is to appeal to the fact that for every condensed \category $ \Ccal $ in the image of the fully faithful embedding $ \Pro(\Catfin) \inclusion \Cond(\Catinfty) $, the restriction map 
	\begin{equation*}
		\Ccal(\upbeta(S)) \longrightarrow \Ccal(S) = \prod_{s \in S} \Ccal(\{s\})
	\end{equation*}
	is an equivalence of \categories \cite[Proposition 2.22]{arXiv:2510.07443}.
	The claim then follows from \Cref{rec:relationship_to_condensed_math}.

	Here's a second, more direct, proof.
	The limit functor $ \lim \colon \Pro(\Catfin) \to \Catinfty $ admits a left adjoint $ \Ccal \mapsto \Ccal_{\uppi}^{\wedge} $, that we refer to as \textit{profinite completion}.
	For any set $ S $, the profinite completion $ S_{\uppi}^{\wedge} $ is simply the profinite set $ \upbeta(S) $ \SAG{Remark}{E.5.2.6}.
	So the claim follows by combining \categorical Hochster duality with the universal property of profinite completion.
\end{proof}


\DeclareFieldFormat{labelnumberwidth}{#1}
\printbibliography[keyword=alph, heading=references]
\DeclareFieldFormat{labelnumberwidth}{{#1\adddot\midsentence}}
\printbibliography[heading=none, notkeyword=alph]

\end{document}